\documentclass{article}
\usepackage{graphics}
\usepackage{ifthen}
\usepackage{amsmath,amsfonts,amstext,amscd,bezier,amssymb,a4,enumerate,epsfig,theorem,psfrag,subfigure}
\usepackage[latin1]{inputenc}
\usepackage{graphicx}

\newcommand{\RR}{{\mathbb{R}}}

\newcommand{\TT}{{\mathbb{S}}}

\newcommand{\Ran}{{\operatorname{Ran}}}

\newtheorem{prop} {Proposition}
\newtheorem{theo} {Theorem}

\def\qed{\unskip\nobreak\hfil\penalty50\hskip1.75em\null\nobreak\hfil
$\blacksquare$ {\parfillskip=0pt \finalhyphendemerits=0 \par}\medbreak}

\newcommand\ran{\operatorname{Ran}}

\title{Fibers and global geometry of functions}
\author{Marta Calanchi, Carlos Tomei and André Zaccur}
\date{}

\begin{document}
\maketitle

\centerline{\it Dedicated to Djairo, an example to follow in many directions}

\begin{abstract}
Since the seminal work of Ambrosetti and Prodi, the study of global folds was enriched by geometric concepts and extensions accomodating new examples. We present the advantages of considering fibers, a construction dating to Berger and Podolak's view of the original theorem. A description of folds in terms of properties of fibers gives new perspective to the usual hypotheses in the subject. The text is intended as a guide, outlining arguments and stating results which will be detailed elsewhere.
\end{abstract}

\medbreak

{\noindent\bf Keywords:}  Dolph-Hammerstein theorem, Semilinear elliptic equations, Ambrosetti-Prodi theorem, folds.

\smallbreak

{\noindent\bf MSC-class:} 35B32, 35J91, 65N30.

\section{Introduction}\label{sec:intro}


When we teach the first courses in calculus and complex or real analysis, a great emphasis is given to geometric issues: we plot graphs, enumerate conformal mappings among special regions, identify homeomorphisms. Alas, this is far from being enough: mappings become too complicated soon. Still, the geometric approach, especially combined with numerical arguments, is very fruitful in some nonlinear contexts.

It is rather surprising that some infinite dimensional maps can be studied in a similar fashion --- one may even think about their graphs! The examples which are amenable to such approach are very few, and they elicit the same sense of wonder that (the equally rare) completely integrable systems do: one is left with a feeling of deep understanding. This text is dedicated to some such examples.

The interested reader could hardly do better than going through the review papers by Church and Timourian (\cite{CT1}, \cite{CT2}), which cover extremely well the material up to the mid nineties. Their approach is strongly influenced by the original Ambrosetti-Prodi view of the problem, which we describe in Section \ref{globalfolds}. In a nutshell, the global geometry of a proper function $F$ is studied through certain properties of its critical set $C$ together with its image $F(C)$, along with the stratification of $C$ in terms of singularities.

This much less ambitious text is mainly an enumeration of techniques and of some recent developments, some of which have not been published. We mostly take the Berger-Podolak route (\cite{BP}) which has been extended by Podolak in \cite{P} and, we believe, still allows for improvement. Instead of the critical set, we concentrate on the restriction of $F$ to appropriate low dimensional manifolds (one dimensional, in the Ambrosetti-Prodi case), the so called fibers.

Essentially, fibers are appropriate in the presence of {\it finite spectral interaction}, which roughly states that the function $F: X \to Y$ splits into a sum of linear and nonlinear terms, $F = L - N$ and $N$ deforms $L$ substantially only along a few eigenvectors spanning a subspace $V \subset X$. The domain splits into orthogonal subspaces, $X = H \oplus V$ and the hypotheses on the nonlinearities are naturally anisotropic. Different requests on $H$ and $V$ yield a global Lyapunov-Schmidt decomposition of $F$: on affine subspaces obtained by translating $H$, $F$ is a homeomorphism and complications due to the nonlinear term manifest on fibers, which are graphs of functions from $V$ to $H$.

Fibers are also convenient for the verification of  properness of $F$. In particular, one may search for folds in nonlinear maps defined on functions with unbounded domains, which are natural in physical situations. Fibers also provide the conceptual starting point for algorithms that solve a class of partial differential equations, an idea originally suggested by Smiley (\cite{S}, \cite{SC}) and later implemented for finite spectral interaction of the Dirichlet Laplacian on rectangles in \cite{CT}.

An abstract setup in the spirit of the characterization of folds as in \cite{CT1}, or like the one we present in Section \ref{sec:adapted}, provide a better understanding of the role of the hypotheses in the fundamental example of Ambrosetti and Prodi. Elliptic theory seems to be less relevant than one might think, it is just that it provides a context in which the required hypotheses are satisfied.

In Section 2, we present the seminal examples --- the Dolph-Hammerstein homeomorphisms and the Ambrosetti-Prodi fold --- in a manner appropriate for our arguments. Fibers and sheets are defined and constructed in Section 3. A global change of coordinates in Section 4 gives rise to adapted coordinates, in which the description of critical points is especially simple. A characterization of the critical points strictly in terms of spectral properties of the Jacobian $DF$ is given. Also, the three natural steps to identify global folds become easy to identify. Further study of how to implement each step is the content of Sections 5, 6 and 7. The last section is dedicated to some examples.

The text is written as a guide: we try to convey the merits of a set of techniques, without providing details. Complete proofs will be presented elsewhere (\cite{CTZ1}, \cite{CTZ2}).

Alas, we stop at folds. There are scattered results in which local or  global cusps were identified: again, the excellent survey \cite{CT2} covers the material up to the mid nineties. So far, the description of cusps seems rather ad hoc. There are characterizations (\cite{CT2}), but they are hard to verify and new ideas are needed. On the other hand, checking that maps are not global folds is rather simple, a matter of showing for example that some points in the image have more than two preimages. A numerical example is exhibited in Section \ref{subsec:numerics}.

\section{The first examples in infinite dimension}

Among the simplest continuous maps between Hilbert spaces are homeomorphisms, in particular linear isomorphisms. A second class of examples are folds.

\subsection{Homeomorphisms: Dolph and Hammerstein}

Dolph and Hammerstein (\cite{D}, \cite{H}) obtained a simple condition under which nonlinear perturbation of linear isomorphisms are still homeomorphisms. A version of their results is the following.

Start with a real Hilbert space $Y$ and a self-adjoint operator $L : X \subset Y \to Y$ for a dense subspace $X$ of $Y$.
Let $\sigma(L)$ be the spectrum of $L$.

\begin{theo} \label{theo:DH} Let $[-c,c] \cap \sigma(L) = \emptyset$ and suppose $N: Y \to Y$ is a Lipschitz map with Lipschitz constant $n < c$.
Equip $X$ with the graph topology, $\| x\|_X = \| x \|_Y + \| Lx\|_Y$. Then the map $F = L - N: X \to Y$ is a Lipschitz homeomorphism.
\end{theo}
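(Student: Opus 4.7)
The plan is to convert $F(x)=y$ into a fixed point problem and exploit the invertibility of $L$ together with the contraction property coming from $n<c$. Since $L$ is self-adjoint on $Y$ and $[-c,c]\cap\sigma(L)=\emptyset$, the spectral theorem gives that $L:X\to Y$ is a bijection with bounded inverse $L^{-1}:Y\to Y$ satisfying $\|L^{-1}\|\le 1/c$, and symmetrically $\|Lx\|_Y\ge c\,\|x\|_Y$ for every $x\in X$. This is the one non-elementary input, but it is standard functional calculus for self-adjoint operators.

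The first main step is existence and uniqueness of preimages. For each $y\in Y$, I would define $T_y:Y\to Y$ by $T_y(x)=L^{-1}(Nx+y)$. Then $T_y$ is Lipschitz with constant at most $n/c<1$, so Banach's fixed point theorem provides a unique $x\in Y$ with $T_y(x)=x$, equivalently $Lx-Nx=y$. Any such $x$ automatically lies in $X=\mathrm{Ran}(L^{-1})$, so $F:X\to Y$ is a bijection.

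The second step is the Lipschitz bound on $F^{-1}$. Given $y_i=F(x_i)$, the identity $L(x_1-x_2)=(y_1-y_2)+(Nx_1-Nx_2)$ combined with $\|L(x_1-x_2)\|_Y\ge c\|x_1-x_2\|_Y$ and $\|Nx_1-Nx_2\|_Y\le n\|x_1-x_2\|_Y$ yields
\begin{equation*}
(c-n)\,\|x_1-x_2\|_Y \le \|y_1-y_2\|_Y,
\end{equation*}
so $F^{-1}$ is Lipschitz in the $Y$-norm with constant $1/(c-n)$. To upgrade to the graph norm on $X$, observe from the same identity that $\|L(x_1-x_2)\|_Y\le \|y_1-y_2\|_Y+n\|x_1-x_2\|_Y$, and plugging in the previous estimate produces a bound for $\|x_1-x_2\|_X=\|x_1-x_2\|_Y+\|L(x_1-x_2)\|_Y$ linear in $\|y_1-y_2\|_Y$, with explicit constant $(1+c)/(c-n)$.

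Finally, $F$ itself is Lipschitz: $\|F(x_1)-F(x_2)\|_Y\le \|L(x_1-x_2)\|_Y+n\|x_1-x_2\|_Y\le (1+n)\|x_1-x_2\|_X$. I do not see a genuine obstacle in the argument; the only place requiring some care is verifying that the fixed point of $T_y$, produced a priori in $Y$, indeed belongs to $X$ so that the statement makes sense with the graph topology, and that the bound on $F^{-1}$ controls both summands in the $X$-norm rather than merely the $Y$-component.
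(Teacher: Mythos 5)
Your proof is correct and follows essentially the same route as the paper: a Banach fixed-point argument made possible by $\|L^{-1}\|\le 1/c$ and the Lipschitz constant $n<c$ of $N$ (your map $T_y$ is just the paper's contraction $C_y(z)=N(L^{-1}z)+y$ conjugated by $L$). Your direct derivation of the Lipschitz constant of $F^{-1}$ from the identity $L(x_1-x_2)=(y_1-y_2)+(Nx_1-Nx_2)$ is a clean, explicit version of the paper's ``keep track of the Banach iteration,'' and you correctly handle the graph-norm issue the paper leaves implicit.
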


Indeed, to solve $F(x) = y$, search for a fixed point of
\[C_y : Y \to Y, \quad C_y(z) =  N ( L^{-1}(z)) +  y \] which is a contraction because the operator $L^{-1}: Y \to Y$ has norm less than $1/c$ by standard spectral theory and then the map $N \circ L^{-1}$ is Lipschitz with constant less than $n/c< 1$ . As usual, the fixed point varies continuously with  $y$. Clearly, $F$ is Lipschitz. To show the same for $F^{-1}$, keep track of the Banach iteration.

\medskip
Notice that the statement allows for differential operators between  Sobolev spaces. Very little is required from the spectrum of $L$.  Clearly, for symmetric bounded operators one should take $X= Y$.

\subsection{Breaking the barrier: the Ambrosetti-Prodi theorem} \label{globalfolds}

What about  more complicated functions? Ambrosetti and Prodi (\cite{AP}) obtained an exquisite example. After refinements by Micheletti and Manes (\cite{MM}), Berger and Podolak (\cite{BP}) and Berger and Church (\cite{BC}), the result may be stated as follows. Let  $\Omega\subset\RR^n$ be a connected, open,  bounded  set with smooth boundary  (for nonsmooth boundaries, see \cite{TZ}). Let $H^2(\Omega)$ and $H^1_0(\Omega)$ be the usual Sobolev spaces and set $X =  H^2(\Omega) \cap H^1_0(\Omega)$ and $Y = H^0(\Omega) = L^2(\Omega)$. The eigenvalues of the Dirichlet Laplacian $-\Delta: X \subset Y \to Y$ are

\[\sigma(- \Delta) =\{0<\lambda_1<\lambda_2\le\ldots\rightarrow\infty\}.\]

Denote by $\phi_1$ the ($L^2$-normalized, positive) eigenvector associated to $\lambda_1$ and split $X  = \ H_X \oplus V_X , Y  = \ H_Y \oplus V_Y  $ in {\it horizontal} and {\it vertical} orthogonal subspaces, where $V_X = V_Y = \langle \phi_1 \rangle$, the one dimensional (real) vector space spanned by $\phi_1$.

\begin{theo} \label{theo:AP}
Let $F: X \to Y$ be $F= L-N$, where $L = - \Delta$, $N(u) = f(u)$, for a smooth, strictly convex function
$f: \RR \to \RR$ satisfying
\[\Ran \ f' = (a,b) \, ,  \quad a < \lambda_1 < b <\lambda_2 \,.\]

Then there are  global homeomorphisms $\zeta:X \to H_Y \oplus \RR  $ and $\xi:Y   \to H_Y \oplus\RR  $ for which $\tilde{F}(z,t) = \xi \circ F \circ \zeta^{-1} (z,t) = (z,-t^2)$.
\end{theo}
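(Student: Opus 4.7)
The plan is to reduce $F = -\Delta - f(\cdot)$ to normal form by a global Lyapunov--Schmidt decomposition along the orthogonal splitting $u = w + s\phi_1$, with $w \in H_X$, $s \in \RR$, and $y = y_H + \tau\phi_1$. Writing $P_H$ for the orthogonal projection onto $H_Y$, the equation $F(u) = y$ is equivalent to the pair
\[
-\Delta w - P_H f(w + s\phi_1) = y_H,
\qquad
\lambda_1 s - \langle f(w + s\phi_1), \phi_1\rangle = \tau.
\]
First I would solve the horizontal equation for $w$ as a smooth function $w(s, y_H)$ (producing the \emph{fiber} $\alpha_{y_H}(s) = w(s, y_H) + s\phi_1$), then study the scalar profile obtained by restricting the vertical equation to fibers, and finally assemble $\zeta$ and $\xi$.

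\emph{Step 1 (fibers).} Fix $s$ and consider $G_s \colon H_X \to H_Y$, $G_s(w) = -\Delta w - P_H f(w + s\phi_1)$. Since $\|\nabla h\|_Y^2 \ge \lambda_2 \|h\|_Y^2$ for $h \in H_X$ while $\Ran f' \subset (a,b)$ with $b < \lambda_2$,
\[
\langle DG_s(w) h, h\rangle_Y \ \ge\ (\lambda_2 - b)\, \|h\|_Y^2, \qquad h \in H_X.
\]
Thus $DG_s(w)$ is a uniformly bounded-below self-adjoint isomorphism. The same estimate controls $\|w\|_X$ in terms of $\|G_s(w)\|_Y$ (via $\|\Delta w\|_Y \ge \lambda_2 \|w\|_Y$ on $H_X$), which yields properness of $G_s$; Hadamard--L\'evy then makes $G_s$ a global diffeomorphism, and the implicit function theorem gives smooth joint dependence of $w$ on $(s, y_H)$.

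\emph{Step 2 (scalar profile).} Set $g_{y_H}(s) := \lambda_1 s - \langle f(\alpha_{y_H}(s)), \phi_1\rangle$, so that $F(\alpha_{y_H}(s)) = y_H + g_{y_H}(s)\,\phi_1$. The tangent vector $\alpha'_{y_H}(s) = \phi_1 + w_s(s, y_H)$ solves a linear elliptic equation with bounded coefficients; a maximum-principle argument shows $\alpha'_{y_H}(s) > 0$ pointwise on $\Omega$, so $s \mapsto f'(\alpha_{y_H}(s))$ is pointwise strictly increasing by strict convexity of $f$. Hence
\[
g'_{y_H}(s)\ =\ \lambda_1 - \int_\Omega f'(\alpha_{y_H}(s))\,\alpha'_{y_H}(s)\,\phi_1
\]
is strictly decreasing in $s$, making $g_{y_H}$ strictly concave, and coercive ($g_{y_H}(s) \to -\infty$ as $|s| \to \infty$) by the end-point conditions $a < \lambda_1 < b$. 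Therefore $g_{y_H}$ has a unique global maximum $\tau^*(y_H)$ at a unique critical point $s^*(y_H)$, both smooth in $y_H$ via the implicit function theorem applied to $\partial_s g_{y_H} = 0$.

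\emph{Step 3 (coordinates) and the main obstacle.} Define
\[
\zeta(u) = \bigl(P_H F(u),\ \sgn(s - s^*(y_H))\sqrt{\tau^*(y_H) - g_{y_H}(s)}\bigr),
\qquad
\xi(y) = \bigl(P_H y,\ \langle y, \phi_1\rangle - \tau^*(P_H y)\bigr),
\]
with $u = w + s\phi_1$, $y_H = P_H F(u)$. A direct computation using $F(\alpha_{y_H}(s)) = y_H + g_{y_H}(s)\phi_1$ gives $\xi \circ F \circ \zeta^{-1}(z, t) = (z, -t^2)$, and $\xi$ is clearly a homeomorphism with inverse $(z, r) \mapsto z + (r + \tau^*(z))\phi_1$. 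The delicate point is the homeomorphism property of $\zeta$ across the fold locus $t = 0$, where the signed square root is only H\"older-$\tfrac12$. I would overcome this by a Taylor expansion with integral remainder,
\[
\tau^*(y_H) - g_{y_H}(s) \ =\ (s - s^*(y_H))^2\, \psi(s, y_H), \qquad \psi > 0 \text{ smooth},
\]
available because $g''_{y_H}(s^*) < 0$ uniformly. This exhibits the second coordinate of $\zeta$ as $(s - s^*(y_H))\sqrt{\psi(s, y_H)}$, smooth in $(s, y_H)$ with nonvanishing $s$-derivative at $s = s^*$, hence globally invertible in $s$ with continuous inverse; combined with the smoothness of $w, s^*, \tau^*, \psi$ in $y_H$ --- all ultimately powered by the uniform spectral gap $\lambda_2 - b > 0$ of Step 1 --- this yields bicontinuity of $\zeta$ on all of $X$.
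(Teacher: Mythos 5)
Your architecture is exactly the Berger--Podolak route that the paper outlines (global Lyapunov--Schmidt, fibers, height function, normal form), and Steps 1 and 3 are essentially sound: the coercivity $\langle DG_s(w)h,h\rangle\ge(\lambda_2-b)\|h\|_Y^2$ on $H_X$ is the right engine for the fibers, and the Hadamard-lemma factorization $\tau^*-g=(s-s^*)^2\psi$ is the standard way to make the signed square root bicontinuous. The genuine gap is in Step 2. From ``$\alpha'(s)>0$ pointwise'' and ``$s\mapsto f'(\alpha(s))$ pointwise increasing'' you conclude that $g'(s)=\lambda_1-\int_\Omega f'(\alpha(s))\,\alpha'(s)\,\phi_1$ is strictly decreasing. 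This is a non sequitur: the integrand contains the factor $\alpha'(s)$ itself, which varies with $s$ and may decrease, so monotonicity of $f'(\alpha(s))$ alone does not control the sign of $g''$. Global strict concavity of the fiber height function is both unproven here and stronger than anything the theorem needs. Moreover, the assertion that ``a maximum-principle argument shows $\alpha'>0$'' is not a one-liner: $DF(u)\alpha'=g'(s)\phi_1$, and past the critical point $DF(u)=-\Delta-f'(u)$ has a negative first eigenvalue, so its inverse does not preserve positivity by the maximum principle; one needs the spectral gap $f'<b<\lambda_2$ and a nodal-domain (or ground-state pairing) argument to exclude sign changes of $\alpha'$ there.

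The repair is the one the paper itself advocates (Propositions \ref{prop:Cc} and \ref{prop:folds}, Section \ref{sec:singularities}): do not try to prove concavity everywhere, only classify the \emph{critical} points of $h$ on each fiber. At a critical point $u_0=\alpha(s_0)$ the tangent $\alpha'(s_0)$ spans $\ker DF(u_0)$ and is therefore a positive multiple of the positive ground state $\psi_1(u_0)$ of $DF(u_0)$ (not of $\phi_1$); pairing the differentiated fiber equation with $\psi_1(u_0)$ kills the $\alpha''$ term and gives
\[
g''(s_0)\;=\;-\,c^2\,\frac{\int_\Omega f''(u_0)\,\psi_1(u_0)^3}{\int_\Omega \phi_1\,\psi_1(u_0)}\;<\;0 ,
\]
by strict convexity of $f$ and positivity of $\psi_1(u_0)$. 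Every critical point is then a nondegenerate local maximum; combined with your coercivity $g\to-\infty$ (which itself uses that $w(s)/s\to0$, i.e.\ the fibers are asymptotically vertical), this forces exactly one critical point per fiber and the strict monotonicity of $g$ on either side of it --- which is all your Step 3 actually uses. With that substitution your proof closes; as written, Step 2 does not.
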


Said differently, the following diagram commutes.

\[
\begin{array}{ccc}
X& \stackrel{{\scriptstyle F}}{\longrightarrow}&Y\\
   {\scriptstyle \zeta}\downarrow & &
\downarrow{\scriptstyle \xi}\\
 H_Y \oplus \RR  &\stackrel{{\scriptstyle (z, -t^2)}}{\longrightarrow}&   H_Y \oplus \RR   \\
  \end{array}
\]

Functions which admit such dramatic simplification are called {\it global folds}. The vertical arrows in the diagram above  are (global) changes of variables and sometimes will be $C^1$ maps, but we will not emphasize this point.

\medskip

The original approach by Ambrosetti and Prodi is very geometric (\cite{AP}). In a nutshell, they show that $F$ is a proper map whose critical set $C$ (in the standard sense of differential geometry, the set of points $u \in X$ for which the derivative $DF(u)$ is not invertible) is topologically a hyperplane, together with its image $F(C)$. They then show that $F$ is proper, its restriction to $C$ is injective and $F^{-1}(F(C)) = C$. Finally, they prove that both connected components of $X - C$ are taken injectively to the same component of $Y - F(C)$. Their final result is a counting theorem: the number of preimages under $F$ can only be $0$, $1$ or $2$.

\medskip
Berger and Podolak  (\cite{BP}), on the other hand, construct a global Lyapunov-Schmidt decomposition for $F$. For $V_X= V_Y = \langle \phi_1 \rangle$,  consider {\it affine horizontal (resp. vertical)} subspaces of $X$ (resp. $Y$), i.e., sets of the form $ H_X + t \phi_1 $, for a fixed $t \in \RR$ (resp. $y + V_Y$, for $y \in H_Y$). Let $P: Y \to H_Y$ be the orthogonal projection. The map $P F_t:  H_X  \to H_Y, P F_t(w) = P F(w + t \phi_1)$, is a bi-Lipschitz homeomorphism, as we shall see below. Thus, the inverse under $F$ of vertical lines $y + V_Y$, for $y \in H_Y$ are curves $\alpha_y : \RR \sim V_X \subset X \to H_X$, which we call {\it fibers}. Fibers stratify the domain $X$. Thus, to show that $F$ is a global fold, it suffices to verify that each restriction $F:\alpha_y \to V_Y \sim \RR$, essentially a map from $\RR$ to $\RR$, is a fold.

\medskip
After such a remarkable example, one is tempted to push forward. This is not that simple: if the (generic) nonlinearity $f$ is not convex, there are points in $Y$ with four preimages (\cite{CTZ1}), so the associated map $F:X \to Y$ cannot be a global fold (for a numerical example, see Section  \ref{subsec:numerics}).

\section{Fibers and height functions}\label{sec:fibers}

Fibers come up in \cite{BP} and \cite{SC} for $C^1$ maps associated to second order differential operators  and in \cite{MST2} in the context of first order periodic ordinary differential equations.  Due to the lack of self-adjointness, the construction in \cite{MST2} is of a very different nature. We follow \cite{P} and \cite{TZ}, which handle Lipschitz maps, allowing the use of piecewise linear functions in the Ambrosetti-Prodi scenario, namely $f$ given by ${f'}(x) = a \hbox{ or } b$, depending if $x <0$ or $x>0$ (\cite{CFS}, \cite{LM2}).

Let $X$ and $Y$ be Hilbert spaces, $X$ densely included in $Y$. Let $L: X \subset Y \to Y$  be a self-adjoint operator with
a simple, isolated, eigenvalue $\lambda_p$, with eigenvector $\phi_p \in X$ with $\|\phi_p\|_Y = 1$. Notice that $\lambda_p$ may be located anywhere in the spectrum $\sigma(L)$ of $L$. As before, consider horizontal and vertical orthogonal subspaces,
\[ X  = \ H_X \oplus V_X \ , \ Y  = \ H_Y \oplus V_Y \ , \ \hbox{ for } \ V_X = V_Y = \langle \phi_p \rangle \]
and  the projection $P:Y \to H_Y$. Let $P F_t: H_X \to H_Y$ be the  projection on $H_Y$ of the restriction  of $F$ to the affine subspace $H_X + t \phi_p$,  $P F_t(w) = P F(w+t \phi_p)$. In the same fashion, the nonlinearity  $N : Y \to Y$ gives rise to maps
$PN_t: H_Y \to H_Y$, which we require to be Lipschitz  with constant $n$ independent of $t \in \RR$ so that
\[ [-n,n] \cap \sigma(L) = \{ \lambda_p \} \ . \eqno{(H)} \]

The standard Ambrosetti-Prodi map fits these hypotheses. In this case, $X \subset Y$ are Sobolev spaces and the derivative $f ' : \RR \to \RR$ is bounded by $a$ and $b$. Set
\[ \gamma = (a + b) /2, \quad L = - \Delta - \gamma, \quad N(u) = f(u) - \gamma u\] and $\lambda_p = \lambda_1$, the smallest eigenvalue of $-\Delta$. Then  the Lipschitz constant $n$ of the maps $PN_t$ satisfies $n < \gamma - a = b - \gamma < \lambda_2 - \gamma$, so that $\lambda_1 - \gamma \le n$.

\begin{theo}\label{theo:Fvv}
Let $F: X \to Y$ satisfy $(H)$ above.
Then for each $t \in \RR$, the map $P F_t$ is a bi-Lipschitz homeomorphism, and a $C^k$ diffeomorphism if $F$ is $C^k$. The Lipschitz constants for $P F_t$ and $(P F_t)^{-1}$  are independent of $t$.
\end{theo}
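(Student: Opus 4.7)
The plan is to mimic the Dolph-Hammerstein argument of Theorem \ref{theo:DH} restricted to the horizontal subspaces, turning $PF_t(w)=y$ into a fixed-point problem. First I would exploit that $V_X=V_Y=\langle\phi_p\rangle$ is an eigenspace of $L$, so $L$ respects the splitting $X=H_X\oplus V_X$, $Y=H_Y\oplus V_Y$. Writing $L_H : H_X\to H_Y$ for the restriction, we have $PL\phi_p=0$ and $PLw=L_Hw$ for $w\in H_X$, hence
\[ PF_t(w) = PL(w+t\phi_p) - PN(w+t\phi_p) = L_H w - PN_t(w). \]
The hypothesis $(H)$ that $[-n,n]\cap\sigma(L)=\{\lambda_p\}$ says the spectrum of $L$ restricted to $H_X$ stays outside $[-n,n]$, so by the spectral theorem $L_H$ is invertible and there exists $n'>n$ with $\|L_H^{-1}\|_{Y\to Y}\le 1/n'$.

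Then $PF_t(w)=y$ is equivalent to $w=L_H^{-1}(y+PN_t(w))$. Viewed as a map $H_Y\to H_Y$ (with $Y$-norm, pushing the result through the inclusion $H_X\hookrightarrow H_Y$), the right-hand side is a contraction with constant at most $n/n'<1$, independent of $t\in\RR$. Banach's theorem provides a unique fixed point $w=w(y,t)$ depending continuously on $y$. Comparing two solutions, subtracting the fixed-point identities and using the Lipschitz bound on $PN_t$ gives
\[ \|w_1-w_2\|_Y \le \frac{1}{n'-n}\,\|y_1-y_2\|_Y, \]
with a constant independent of $t$. Now from $L(w_1-w_2)=(y_1-y_2)+(PN_t(w_1)-PN_t(w_2))$ we estimate $\|L(w_1-w_2)\|_Y$ and add the previous bound to control the graph norm $\|w_1-w_2\|_X$, giving uniform bi-Lipschitzness of $(PF_t)^{-1}$. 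The forward Lipschitz bound is immediate from the identity above, using $\|Lw\|_Y\le\|w\|_X$ and $\|w\|_Y\le\|w\|_X$.

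For the $C^k$ case, I would feed $G(w,y)=PF_t(w)-y$ into the implicit function theorem. The partial derivative $D_wG(w,y)=L_H-DPN_t(w)=L_H(I-L_H^{-1}DPN_t(w))$ is invertible by Neumann series since $\|L_H^{-1}DPN_t(w)\|\le n/n'<1$ pointwise, so the fixed point inherits regularity from $F$. The conceptual obstacle one might worry about — uniformity in $t$ — is essentially free here: $L_H$ is independent of $t$, and the standing hypothesis explicitly asks that the Lipschitz constants of the shifted maps $PN_t(\cdot)=PN(\cdot+t\phi_p)$ be bounded uniformly in $t$, so the contraction constant $n/n'$ never depends on $t$. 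The only subtlety worth dwelling on is being careful with the two norms in play — $\|\cdot\|_Y$ for the contraction and $\|\cdot\|_X$ for the final statement — which is handled by promoting the $Y$-norm bound to the graph norm via the identity for $L(w_1-w_2)$ above.
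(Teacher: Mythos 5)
Your argument is correct and is essentially the paper's own proof: both reduce $PF_t(w)=y$ to a Banach fixed-point problem for a contraction with constant $n/c<1$ on $H_Y$, using that $(H)$ pushes $\sigma(L|_{H_Y})$ outside $[-n,n]$ so that $\|L_H^{-1}\|\le 1/c$ with $c>n$ uniformly in $t$. The only differences are cosmetic (you iterate in the variable $w$ rather than $z=Lw$) plus the welcome extra detail of promoting the $Y$-estimate to the graph norm and invoking the implicit function theorem for the $C^k$ case, which the paper leaves implicit.
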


\begin{proof}
The proof follows Theorem \ref{theo:DH} once the potentially nasty eigenvalue $\lambda_p$ is ruled out. Let $c$ be the absolute value of the point in $\sigma(L) \setminus \{ \lambda_p\}$ closest to $0$, so that $0 \le n < c$. The operator $L: X \to Y$ restricts to   $L: H_X \to H_Y$, which is invertible self-adjoint, and again $L^{-1}: H_Y \to H_Y$ with $\| L^{-1} \| \le 1/c$.
The solutions $w \in H_X$ of $P F_t(w) = g \in H_Y$ solve $PLu - PN(u) = Lw - PN_t(w) =g$ for $u = w + t \phi_p$. The solutions $w$  correspond to the fixed points of $C_g : H_Y \to H_Y$, where
\[  \quad C_g(z) =  PN_t(L^{-1} z) +  g, \quad \hbox{ for } \  Lw = z \in H_Y. \]
 The map $C_g$ is a contraction with constant bounded by $ n/c < 1$ (independent of $t$). Now follow the proof of Theorem \ref{theo:DH}. \qed
\end{proof}

The attentive reader may have noticed that the effect of the nonlinearity $N$ along the vertical direction is irrelevant for the construction of fibers.

\medskip
The same construction applies when the interval $[-n,n]$ defined by the Lipschitz constant $n$ of $PN_t: H_X \to H_Y$ interacts with an isolated subset $I$ of $\sigma(L)$ --- more precisely, $I = [-n,n] \cap \sigma(L)$ and there is an open neighborhood $U$ of $I \subset \RR$ for which $I = U \cap \sigma(L)$. In this case $P$ is the orthogonal projection on $I$, which takes into account possible multiplicities. In the special situation when $I$ consists of a finite number of eigenvalues (accounting multiplicity), we refer to {\it finite spectral interaction} between $L$ and $N$.

We concentrate on the case when $I = \{ \lambda_p \}$ consists of a simple eigenvalue.
A more careful inspection of the constants in the Banach iteration in the proof above yields the following result (\cite{CT}, \cite{TZ}). The image under $F$ of horizontal affine subspaces of $X$ are {\it sheets}. The inverse under $F$ of vertical lines of $Y$ are {\it fibers}.

\begin{prop} \label{prop:fibers}  If $F$ is $C^1$, sheets  are graphs of $C^1$ maps from $H_Y$ to $\langle \phi_p \rangle$ and fibers are graphs of $C^1$ maps from  $\langle \phi_p \rangle$ to $H_X$.
 Sheets are essentially flat, fibers are essentially steep.
\end{prop}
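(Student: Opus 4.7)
The plan is to read both graph structures off a $C^1$-enhanced version of Theorem \ref{theo:Fvv} and then justify the geometric adjectives by implicit differentiation together with the Neumann bound already used in its proof. Throughout, write $Q=I-P$ for the orthogonal projection of $Y$ onto $V_Y=\langle\phi_p\rangle$ and decompose points of $Y$ as $(g,s\phi_p)\in H_Y\oplus V_Y$.

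For the \emph{sheets}, the image of the affine subspace $H_X+t\phi_p$ under $F$ is parametrized by $w\mapsto(PF_t(w),QF(w+t\phi_p))$. I first upgrade Theorem \ref{theo:Fvv} to the $C^1$ category, either by running the Banach iteration with parameter dependence or, more directly, by observing that
\[D(PF_t)(w)=L|_{H_X}-P\,DN(u)|_{H_X}\]
is invertible with $\|[D(PF_t)(w)]^{-1}\|\le 1/(c-n)$ via the same Neumann argument, so the inverse function theorem applies. Inverting the first coordinate exhibits the sheet as the graph of
\[\sigma_t:H_Y\to V_Y,\qquad \sigma_t(g)=QF\bigl((PF_t)^{-1}(g)+t\phi_p\bigr),\]
a composition of $C^1$ maps.

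For the \emph{fibers}, the set $F^{-1}(g+V_Y)=\{u\in X:PF(u)=g\}$ is cut out by $\Phi(w,t):=PF(w+t\phi_p)-g=0$ in $H_X\times\RR$. For each fixed $t$, Theorem \ref{theo:Fvv} produces the unique solution $w(t)=(PF_t)^{-1}(g)$; the implicit function theorem (with $\partial_w\Phi=D(PF_t)(w)$ invertible) then yields its $C^1$ dependence on $t$. Hence the fiber is the graph of $\alpha_g:V_X\to H_X$, $\alpha_g(t\phi_p)=(PF_t)^{-1}(g)$.

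For the qualitative dichotomy, differentiate. Since $L$ preserves the spectral splitting one has $QL|_{H_X}=0$ and $PL\phi_p=0$, so with $u=(PF_t)^{-1}(g)+t\phi_p$,
\[D\sigma_t(g)=-\,QDN(u)\,[D(PF_t)(w)]^{-1},\qquad \alpha_g'(t\phi_p)=[D(PF_t)(w)]^{-1}\,P\,DN(u)\phi_p.\]
The uniform estimate $1/(c-n)$ controls both expressions: sheets show only a small vertical response to horizontal variation (flat), while fibers drift only slightly in $H_X$ as $t\phi_p$ varies (steep). The main obstacle I anticipate is the flatness estimate: it features $QDN$, whereas hypothesis (H) directly controls only $PDN$. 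In substitution-operator examples like Ambrosetti--Prodi this is automatic because $N(u)=f(u)$ has a pointwise Jacobian, but a fully general quantitative statement would require an auxiliary anisotropic bound on $DN$.
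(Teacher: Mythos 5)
The paper offers no proof of this proposition: it only remarks that the result follows from ``a more careful inspection of the constants in the Banach iteration'' of Theorem \ref{theo:Fvv} and defers to \cite{CT} and \cite{TZ}. Your reconstruction --- reading the graph structure off the global bijectivity of $PF_t$, obtaining $C^1$ regularity from the implicit function theorem applied to $\Phi(w,t)=PF(w+t\phi_p)-g$, and extracting the angle bounds from the explicit derivative formulas together with $\bigl\|[D(PF_t)(w)]^{-1}\bigr\|\le 1/(c-n)$ --- is precisely the differential version of that remark, and your formulas for $D\sigma_t$ and $\alpha_g'$ are correct.

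The caveat you raise at the end is genuine, but it is larger than you state. You flag that flatness of sheets needs control of $QDN(u)$, which $(H)$ does not give. The same problem afflicts your steepness estimate: hypothesis $(H)$ bounds only the horizontal block $PDN(u)|_{H_Y}$ (Lipschitz in $w$, uniformly in $t$), whereas your formula for the fiber tangent involves $PDN(u)\phi_p$, the derivative of $PN$ in the \emph{vertical} direction, which $(H)$ does not control either. So, as written, neither of the two quantitative conclusions follows from $(H)$ alone; both require an auxiliary bound on the remaining blocks of $DN(u)$ --- for instance assuming $N:Y\to Y$ globally Lipschitz, as is done in \cite{TZ}, or restricting to Nemitskii nonlinearities $N(u)=f(u)$ with $f'$ bounded, where all blocks are controlled simultaneously. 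With that strengthening made explicit your argument is complete; the graph structure and the $C^1$ regularity, which need only $(H)$ and Theorem \ref{theo:Fvv}, are fine as they stand.
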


We define what we mean by essential flatness and steepness. Let  $\nu(y)$ be the normal at a point $y \in Y$ of (the tangent space of) a sheet, and $\tau(u)$ be the tangent vector at $ u \in X$  of a fiber. Then there is a constant $\epsilon \in (0 , \pi/2)$ such that $\phi_p$ makes an angle less than $\epsilon $ (or greater than $\pi - \epsilon$, due to orientation) with both vectors.

\section{Adapted coordinates and a plan}\label{sec:adapted}

Suppose $L$ and $N$ interact at a simple eigenvalue $\lambda_p$.  Write
\[ F(u) = PF (u) + \langle F(u), \phi_p \rangle \phi_p = PF (u) + h(u) \phi_p \]
where the map $h: X \to \RR $ is called the {\it height function}.
In the diagram below, invertible maps are  bi-Lipschitz (\cite{TZ}) or $C^k$ diffeomorphisms, depending if $PF_t$ is Lipschitz or $C^k$. The smoothness of $h$ and $h^a = h \circ \Phi$ follow accordingly.
\[
\begin{array}{ccl}
{X = H_X \oplus V_X}&
\stackrel{{\scriptstyle F}}{\longrightarrow}&
{Y = H_Y \oplus V_Y} \\
   {\scriptstyle \Phi^{-1}=( P F_t, Id)}\searrow &
      &
\nearrow{\scriptstyle F^a=F \circ \Phi=(Id, h^a)}\\
    &{Y}

&  \\
  \end{array}
\]

The map $F$ has been put in {\it adapted coordinates} by the change of variables $\Phi$:
\[ F^a: Y \to Y  \ , \quad (z,t) \mapsto (z, h^a(z,t)) \ . \]

Notice that fibers of $F$ are taken to vertical lines in the domain of $F^a = F \circ \Phi$. Explicitly, the vertical lines $\{ (z_0, t) : t \in \RR \}$ parameterized by $z_0 \in H_Y$ correspond to fibers $u(z_0,t) = (P F_t)^{-1}(z_0) + t \phi_p = w(z_0,t) + t \phi_p $.
Thus $F^a$ is just a rank one nonlinear perturbation:
\[ F^a(z,t) = (z, h^a(z,t)) \sim z + h^a(z+ t \phi_p) \phi_p \ . \]

 In a very strict sense, this is also true of $F$. In order to make $F$ similar to an Ambrosetti-Prodi map, define $G = F^a \circ (- \Delta): X \to Y$:
\[ u \stackrel{- \Delta}{\longmapsto} z + t \phi_1 \stackrel{{\scriptstyle F^a}}{\longmapsto} z + t\phi_1 + (h^a(z+ t \phi_1)-t) \phi_1 = - \Delta u + \psi(u) \phi_1, \]
for some nonlinear functional $\psi$. We generalize slightly.

\begin{prop} \label{prop:rankone} Let $N$ be a $C^1$ map. Say $L$ and $N$ interact at a simple eigenvalue $\lambda_p$ and $L$ is invertible. Then, after a $C^1$  change of variables, the $C^1$ function $F= L-N: X \to Y$ becomes
$G : X \to Y$, $G = L + \psi(u) \phi_p$, for some $\psi: X \to \RR$.
\end{prop}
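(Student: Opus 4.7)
The plan is to combine the adapted coordinates construction of Section \ref{sec:adapted} with the observation that invertibility of $L$ turns $L: X \to Y$ into a global $C^{\infty}$ diffeomorphism when $X$ is endowed with its graph norm. Hypothesis $(H)$ together with $F \in C^1$ places us squarely in the setting of Theorem \ref{theo:Fvv}, so $\Phi^{-1} = (PF_t, Id) : X \to Y$ is a global $C^1$ diffeomorphism whose inverse $\Phi$ conjugates $F$ to the rank-one form $F^a(z,t) = (z, h^a(z,t))$ with $h^a \in C^1(Y, \RR)$.

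I would then define the $C^1$ diffeomorphism $\Psi := \Phi \circ L : X \to X$ and set $G := F \circ \Psi : X \to Y$. For any $u \in X$, decomposing $Lu = PLu + \langle Lu, \phi_p\rangle\, \phi_p$ inside $Y = H_Y \oplus V_Y$ yields
\[ G(u) = F^a(Lu) = PLu + h^a\bigl(PLu, \langle Lu, \phi_p\rangle\bigr)\,\phi_p = Lu + \psi(u)\,\phi_p, \]
where the functional
\[ \psi(u) := h^a\bigl(PLu, \langle Lu, \phi_p\rangle\bigr) - \langle Lu, \phi_p\rangle \]
is $C^1$ from $X$ to $\RR$, being assembled from $L$, the orthogonal projection $P$ and $h^a$. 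This is the promised presentation of $G$ as $L$ plus a rank-one nonlinear term along $\phi_p$; the Ambrosetti--Prodi case ($L = -\Delta$) worked out immediately before the statement is recovered verbatim.

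The main technical point I expect to need is to promote the fiberwise statement of Theorem \ref{theo:Fvv}---``$P F_t$ is a $C^1$ diffeomorphism for each $t$''---to a joint $C^1$ diffeomorphism statement for $\Phi^{-1}$ on all of $X = H_X \oplus V_X$. This should follow from the inverse function theorem applied to
\[ D\Phi^{-1}(w + t\phi_p) = \begin{pmatrix} D_w (PF_t)(w) & * \\ 0 & Id_{V_X} \end{pmatrix}, \]
whose invertibility reduces to fiberwise invertibility of $D_w(PF_t)$, while continuity in $(w,t)$ is inherited from $F \in C^1$. Once this joint regularity is established, the rest of the argument is purely algebraic bookkeeping, and no further spectral input beyond hypothesis $(H)$ and invertibility of $L$ is required.
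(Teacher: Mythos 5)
Your construction $G = F\circ(\Phi\circ L) = F^{a}\circ L$ with $\psi(u) = h^{a}(PLu,\langle Lu,\phi_p\rangle) - \langle Lu,\phi_p\rangle$ is exactly the paper's argument, which performs the same computation for $L=-\Delta$ just before the statement and then "generalizes slightly." Your extra remarks on why $\Phi$ is a global $C^1$ diffeomorphism (not just fiberwise) and why $L:X\to Y$ is itself a diffeomorphism in the graph norm are the right supporting details and consistent with what the paper asserts in Section \ref{sec:adapted}.
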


\medskip
For Ambrosetti-Prodi operators $F(u) = - \Delta u - f(u)$,  the nonlinear perturbation is given by a Nemitskii map $u \mapsto f(u)$.
It is not surprising that once we enlarge the set of nonlinearities new global folds arise. For a map $F$ given in adapted coordinates by $F^a(z,t ) = (z , h^a(z,t))$, appropriate choices of the {\it adapted height function} $h^a$ yields all sorts of behavior.

The critical set of $F:X \to Y$ is compatible with fibers as follows (\cite{BP}, \cite{CTZ2}).

\begin{prop} \label{prop:Cc} Suppose the $C^1$ map $F:X \to Y$ admits  fibers. Then $u_0$ is a critical point of $F$ if and only if it is a critical point of the height function $h$ along its fiber, or  equivalently of the adapted height function $h^a$.
\end{prop}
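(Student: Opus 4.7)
The plan is to pass to adapted coordinates, where the statement reduces to the inspection of a block-triangular Jacobian. By Theorem~\ref{theo:Fvv}, for each fixed $t$ the map $PF_t: H_X \to H_Y$ is a $C^1$ diffeomorphism, and joint smoothness in $(z,t)$ (via the implicit function theorem applied to $PF(w+t\phi_p)-z=0$) upgrades this to $\Phi: Y \to X$, $\Phi(z,t) = (PF_t)^{-1}(z) + t\phi_p$, being a $C^1$ diffeomorphism of Banach spaces. Consequently $D\Phi(z_0,t_0)$ is an isomorphism at every point, and writing $(z_0,t_0) = \Phi^{-1}(u_0)$ and differentiating $F^a = F \circ \Phi$ gives $DF^a(z_0,t_0) = DF(u_0)\, D\Phi(z_0,t_0)$. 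Hence $u_0$ is a critical point of $F$ if and only if $(z_0,t_0)$ is a critical point of $F^a$.

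Compute $DF^a(z_0,t_0)$ directly from $F^a(z,t) = (z, h^a(z,t))$. Relative to the splitting $Y = H_Y \oplus V_Y$, its matrix is
\[
DF^a(z_0,t_0) = \begin{pmatrix} I_{H_Y} & 0 \\ D_z h^a(z_0,t_0) & \partial_t h^a(z_0,t_0) \end{pmatrix}.
\]
Because the top-right block vanishes and the top-left block is the identity, invertibility reduces to non-vanishing of the scalar $\partial_t h^a(z_0,t_0)$. Thus the critical points of $F^a$ are exactly the pairs $(z,t)$ at which $\partial_t h^a(z,t) = 0$.

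It remains to identify the condition $\partial_t h^a(z_0,t_0)=0$ with $u_0$ being a critical point of $h$ along its fiber. The fiber through $u_0$ is parameterized by $t \mapsto u(z_0,t) = \Phi(z_0,t)$, and by construction $h(u(z_0,t)) = h^a(z_0,t)$; differentiating in $t$ yields $\tfrac{d}{dt} h(u(z_0,t))\big|_{t=t_0} = \partial_t h^a(z_0,t_0)$, which gives both equivalences claimed in the statement (for $h$ along its fiber in $X$ and for $h^a$ along the vertical line $\{z_0\}\times \RR$ in $Y$).

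The only non-routine ingredient is that $\Phi$ is a diffeomorphism of (possibly infinite-dimensional) Banach spaces; this is precisely where hypothesis $(H)$ enters, through Theorem~\ref{theo:Fvv}. Once this is in hand, the proof is a chain-rule computation together with the observation that a lower block-triangular operator with identity and a scalar on the diagonal inverts iff the scalar is nonzero. I do not expect any real obstacle: the whole point of setting up adapted coordinates was to make the critical set transparent, and this proposition is the first dividend.
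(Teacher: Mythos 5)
Your proof is correct, and it is precisely the argument the paper's own framework is designed to yield: the paper states Proposition~\ref{prop:Cc} without proof (deferring to \cite{BP} and \cite{CTZ2}), but its Section~\ref{sec:adapted} sets up the adapted coordinates exactly so that $DF^a$ becomes block-triangular with the scalar $\partial_t h^a$ as the only possibly degenerate entry. The one ingredient worth flagging as non-automatic --- joint $C^1$-regularity of $\Phi$, hence invertibility of $D\Phi$ everywhere --- you correctly reduce to Theorem~\ref{theo:Fvv} plus the implicit function theorem, which is how the paper treats it as well.
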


Isolated local extrema have to alternate between maxima and minima. In particular, given the appropriate  behavior at infinity at each fiber and the fact that all critical points are of the same type, we learn from a continuity argument that the full critical set $C$ is connected, with a single point on each fiber (\cite{CDT}).

\medskip
The study of a function $F: X \to Y$ reduces to three steps:

 \begin{enumerate}
 \item Stratify $X$ into fibers.
 \item Verify the asymptotic behavior of $F$ along fibers.
 \item Classify the critical points of the restriction of $F$ along fibers.
 \end{enumerate}

The following result is natural from this point of view (\cite{CTZ2}). Let $F:X \to Y$ satisfies $(H)$ of Section \ref{sec:fibers}, so that, by Proposition \ref{prop:fibers}, $X$ stratifies in one dimensional fibers $\{u (z, t ) : t \in \RR\}$, one for each $z \in H_Y$.

\begin{prop}  \label{prop:folds}
Suppose that, on each fiber,
\[ \lim_{t \to \pm \infty} \langle F( u (z, t )), \phi_p \rangle = \lim_{t \to \pm \infty} h(u (z, t )) = - \infty \ . \]
Suppose also that each critical point of $h$ restricted to each fiber is an isolated local maximum. Then $F: X \to Y$ is a global fold, in the sense that there are homeomorphisms on domain and image that give rise to a diagram as in Theorem \ref{theo:AP}.
\end{prop}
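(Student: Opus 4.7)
The plan is to work in the adapted coordinates of Section~\ref{sec:adapted}, where $F^a=F\circ\Phi$ has the form $F^a(z,t)=(z,h^a(z,t))$ on $Y=H_Y\oplus V_Y\simeq H_Y\oplus\RR$, and fibers of $F$ correspond to vertical lines $\{z\}\times\RR$. The fold question then reduces to analysing the one-parameter family of scalar functions $s\mapsto h^a(z,s)$ and then assembling them continuously in $z\in H_Y$.

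First, for each fixed $z$, I would show that $h^a(z,\cdot):\RR\to\RR$ has a unique critical point $t^*(z)$, attaining the global maximum $M(z):=h^a(z,t^*(z))$, and that $h^a(z,\cdot)$ is strictly increasing on $(-\infty,t^*(z)]$ and strictly decreasing on $[t^*(z),\infty)$. Existence of a maximum follows from the hypothesis that $h^a(z,s)\to-\infty$ as $s\to\pm\infty$. Uniqueness follows from the fact that between two isolated local maxima of a continuous real function there must sit a local minimum, contradicting the hypothesis that every fiberwise critical point is an isolated local maximum.

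Next, I would define fiberwise a map $\eta_2(z,\cdot):\RR\to\RR$ by the implicit relation
\[ h^a(z,\eta_2(z,t))\;=\;M(z)-t^2, \]
selecting the unique root in $[t^*(z),\infty)$ when $t\ge 0$ and in $(-\infty,t^*(z)]$ when $t\le 0$; both branches agree at $t=0$ with common value $t^*(z)$, and each is a strictly monotone bijection onto its half-range. Setting $\eta(z,t):=(z,\eta_2(z,t))$ and $\xi(w,s):=(w,\,s-M(w))$ on $Y$, a direct computation gives
\[ \xi\circ F^a\circ\eta(z,t)\;=\;\bigl(z,\,h^a(z,\eta_2(z,t))-M(z)\bigr)\;=\;(z,-t^2), \]
so that with $\zeta^{-1}:=\Phi\circ\eta$ the diagram of Theorem~\ref{theo:AP} commutes.

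The main obstacle is upgrading these pointwise-in-$z$ constructions to genuine homeomorphisms $\zeta$ and $\xi$. This requires $z\mapsto t^*(z)$ and $z\mapsto M(z)$ to be continuous, and $\eta_2$ together with its inverse to be jointly continuous. Continuity of $t^*$ is the delicate step: the critical set of $F$, which by the discussion following Proposition~\ref{prop:Cc} is connected and meets each fiber exactly once, identifies in adapted coordinates with the graph of $t^*$. To exclude escape $t^*(z_n)\to\pm\infty$ along $z_n\to z_0$, I would combine the closedness of the critical set (from continuity of $DF$) with the lower bound $M(z_n)\ge h^a(z_n,0)\to h^a(z_0,0)$ and a locally uniform version of the decay $h^a(z,s)\to-\infty$ as $|s|\to\infty$. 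Once $t^*$, and therefore $M$, are continuous, joint continuity of $\eta_2$ and of its inverse follows from the strict monotonicity of $h^a(z,\cdot)$ on each side of $t^*(z)$ by a standard monotone-inversion argument, delivering the bi-continuous changes of variables required by the definition of a global fold.
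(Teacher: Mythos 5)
Your argument is correct and follows essentially the route the paper outlines: pass to adapted coordinates, use the decay of $h^a(z,\cdot)$ at $\pm\infty$ together with the fact that isolated local extrema must alternate to obtain exactly one critical point (a global maximum) per fiber, and then conjugate each fiber explicitly to $t\mapsto -t^2$. Your observation that continuity of $z\mapsto t^*(z)$ requires a locally uniform version of the fiberwise decay is well taken --- that uniformity is precisely what hypotheses $(V\pm)$ of Section \ref{subsec:asy} supply (and it also follows from $(H)$ when $N$ is globally Lipschitz on $Y$, via the uniform Lipschitz bound on $(PF_t)^{-1}$), so the statement of the proposition leaves it implicit rather than omitting it.
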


To verify that such limits exist, one might check hypotheses $(V\pm)$ in Section \ref{subsec:asy}, but there are alternatives. Similarly, there are ways of obtaining fibers which do not fit the construction presented in Section \ref{sec:fibers} (this is the case for perturbations of non-self-adjoint operators, Section \ref{subsec:nonself}). The upshot is that there is some loss in formulating the three step recipe into a clear cut theorem.

\medskip

 As trivial examples, $h^a(z,t)= - t^2$ is a global fold, whereas $h^a(z,t) = t^3 - t$ has a critical set consisting of two connected components having only (local) folds (from Section \ref{subsec:Morin}). More complicated singularities require the dependence on $z$: not every fiber of $F$ (equivalently, vertical line in the domain of $F^a$) has the same number of critical points close to a cusp, for example. The reader is invited to check that $(z,t) \mapsto (z, t^3 - \langle z, \tilde{\phi} \rangle t)$ is a global cusp, for $\tilde{\phi}$ any fixed vector in $H_Y$. Higher order Morin singularities, considered in Section \ref{sec:singularities}, are obtained in a similar fashion. From the Proposition \ref{prop:rankone}, changes of variables on such maps yield nonlinear rank one perturbations of the Laplacian which are globally diffeomorphic to the standard normal forms of Morin singularities.

\medskip
We consider the standard Ambrosetti-Prodi scenario in the light of this strategy. For the function $F(u) = - \Delta u - f(u)$ defined in Theorem \ref{theo:AP}, elliptic theory yields all sort of benefits --- the smallest eigenvalue of the Jacobian $DF(u)$ is always simple, the ground state may be taken to be a positive function in $X$.

The hypotheses required for the construction of fibers in Theorem \ref{theo:Fvv} do not imply the simplicity of the relevant eigenvalue: there are examples for which there is no naturally defined $C^1$ functional $\lambda_p: X \to \RR$ because two eigenvalues collide. One might circumvent this difficulty by forcing the nonlinearity $N$ to be smaller, but it turns out that this is not necessary. The hypotheses instead imply the simplicity of $\lambda_p$ in an open neighborhood of the critical set $C$ of $F$, and this is all we need, as we shall see in Section \ref{spectralsing}.

The positivity of the ground state and the convexity of the nonlinearity $f$ are used in a combined fashion in the Ambrosetti-Prodi theorem to prove that along fibers the height function only has local maxima. Clearly, this is a property only of critical points. On the other hand, the nonlinearity $N(u) = f(u)$ is so rigid that the standard hypothesis of convexity of $f$ is essentially necessary, as shown in \cite{CTZ1}. More general nonlinearities require a better understanding of the singularities.

\medskip
We now provide more technical details on each of the three steps.

\section{Obtaining fibers in other contexts}\label{sec:fifibers}

For starters, what if $L$ is not self-adjoint, or $X$ is not Hilbert?

\subsection{Podolak's approach} \label{subsec:podolak}
Suppose momentarily that $X$ and $Y$ are Banach spaces. Let $L: X \to Y$ be a Fredholm operator of index zero with kernel generated by  a vector $\phi_X$ and let $\phi_Y$ be a vector not in $\ran L$. Podolak (\cite{P}) considered the following scenario, for which she obtained a lower bound on the number of preimages for a region of $Y$ of vectors with very negative component along $\phi_Y$. Split $X = H_X \oplus V_X$ where $V_X = \langle \phi_X \rangle$ and $H_X$ is any complement. Also, split $Y = H_Y \oplus V_Y$ where  $H_Y= \ran L$ and $V_Y = \langle \phi_Y \rangle$.  In particular $L: H_X \to H_Y$ is an isomorphism.
Also, define the associated projection $P: Y \to H_Y$.  Write
$u = w + t \phi_X, \ y = g + s \phi_Y$ for $w \in H_X$. The equation $F(u) = Lu - N(u) = y$ becomes
\[ L(w + t \phi_X) - N(w + t \phi_X) =  L w - N(w + t \phi_X) = g + s \phi_Y, \]
and, as in Theorem \ref{theo:Fvv}, we are reduced to solving the map
\[ C_g : H_Y \to H_Y, \quad C_g(z) =  PN_t(L^{-1} z) +  g \ , \quad \hbox{ for } \  Lw = z \in H_Y. \]
Her hypotheses imply that such maps are contractions.

\subsection{Transplanting fibers}

The estimates arising from spectral theorem in the Hilbert context are easy to obtain and possibly more effective. Podolak's hypotheses are harder to verify. There is a possibility: getting fibers in Hilbert spaces and transplanting them to Banach spaces. This happens for example when moving from the Ambrosetti-Prodi example as a map between Sobolev spaces (\cite{BP}) to a map between H\"older spaces (\cite{AP}). The classification of singularities is simpler with additional smoothness (Section \ref{sec:singularities}).

\begin{prop} Let $F= L - N: X \to Y$ satisfy  hypothesis $(H)$ of Section \ref{sec:fibers}. Consider the densely included Banach spaces $A \subset X$ and $B \subset Y$ allowing for the $C^1$  restriction $F: A \to B$ for which $V_X = V_Y \subset A$. Suppose that $DF(a): A \to B$ is a Fredholm operator of index zero for each $a \in A$. Then fibers of $F: X \to Y$ either belong to $A$ or do not intersect $A$. \end{prop}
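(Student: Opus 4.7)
My plan is to fix a fiber $\alpha(t) = (PF_t)^{-1}(z_0) + t\phi_p$ of $F : X \to Y$, assume $\alpha(t_0) \in A$, and show that the set $S := \{t \in \RR : \alpha(t) \in A\}$ is both open and closed in $\RR$. Since $S \ni t_0$, this will force $S = \RR$.

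The linchpin is a block Fredholm lemma that I would prove first: for every $a \in A$, the restriction $PDF(a)|_{H_X \cap A} : H_X \cap A \to H_Y \cap B$ is a Banach-space isomorphism. The argument goes by introducing the rank-one modified operator $M(a) := PDF(a) + Q_V$, with $Q_V = I - P$ the projection onto $V_Y$. In the decompositions $X = H_X \oplus V_X$ and $Y = H_Y \oplus V_Y$, the operator $M(a)$ is upper triangular with $PDF(a)|_{H_X}$ and the identity on $V_Y$ on the diagonal; the upper-left block is already an iso by the proof of Theorem \ref{theo:Fvv}, so $M(a) : X \to Y$ is an iso. Viewed as a map $A \to B$, we have $M(a) = DF(a) + K(a)$ with $K(a) := Q_V - Q\,DF(a)$ of one-dimensional range inside $V_Y \subset B$, hence compact. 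Combined with the hypothesis that $DF(a) : A \to B$ is Fredholm of index zero, $M(a) : A \to B$ is also Fredholm of index zero; injectivity is inherited from the iso on $X$, so $M(a) : A \to B$ is an iso. Reading off the $(1,1)$-block in the induced decompositions $A = (H_X \cap A) \oplus V_X$ and $B = (H_Y \cap B) \oplus V_Y$ (which make sense because $\phi_p \in A \cap B$) yields the claim.

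From here openness of $S$ is immediate. At any $t_1 \in S$, apply the Banach-space implicit function theorem to $\Psi(w, t) := PF(w + t\phi_p) - z_0$ on $(H_X \cap A) \times \RR$ at $(\alpha(t_1) - t_1 \phi_p,\, t_1)$: the partial derivative in $w$ is precisely the iso above, yielding a local $C^1$ solution curve $t \mapsto w_A(t) \in H_X \cap A$. By uniqueness of solutions of $PF(w + t\phi_p) = z_0$ in $H_X$ (Theorem \ref{theo:Fvv}), this curve coincides with $\alpha(t) - t\phi_p$, so a neighborhood of $t_1$ lies in $S$.

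The delicate part, and what I expect to be the main obstacle, is closedness of $S$. Reformulating the fiber as the integral curve in $A$ of the vector field $V(a) := \phi_p - (PDF(a)|_{H_X \cap A})^{-1} PDF(a)\phi_p$ (well defined by the lemma, continuous in $a \in A$), maximal $A$-orbits are defined on open intervals and the obstruction to closedness is finite-time blow-up of $\|\alpha(t)\|_A$. I know that $\alpha(t)$ is bounded in $X$ because the $X$-fiber is globally defined, and that $F(\alpha(t)) = z_0 + h^a(z_0, t)\phi_p$ is bounded in $B$ on any compact $t$-interval (using $z_0 = PF(u_0) \in B$ and $\phi_p \in B$). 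Excluding blow-up then reduces to an a priori estimate of the schematic form $\|a\|_A \lesssim \|F(a)\|_B + \|a\|_X$ propagated along the fiber; this is a classical elliptic-regularity statement in the Sobolev/H\"older setting of Ambrosetti--Prodi and is the point where, in the abstract formulation, additional structure of the embedding $A \hookrightarrow X$ beyond dense inclusion and Fredholmness of $DF$ must enter. Once such an estimate is in hand, $S$ is clopen and non-empty, hence all of $\RR$, and the fiber lies entirely in $A$.
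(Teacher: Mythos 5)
Your proposal correctly isolates the mechanism but does not close the argument. The block Fredholm lemma is sound: $M(a)=PDF(a)+Q_V$ is upper triangular with invertible diagonal blocks, hence an isomorphism of $X$ onto $Y$; as a map $A\to B$ it differs from $DF(a)$ by a rank-one (hence compact) operator with range in $V_Y$, so it is Fredholm of index zero, injective, and therefore an isomorphism, and reading off the $(1,1)$ block gives the invertibility of $PDF(a)|_{H_X\cap A}$. This, plus the implicit function theorem and uniqueness of the $H_X$-solution from Theorem \ref{theo:Fvv}, proves that $S=\{t:\alpha(t)\in A\}$ is open. That is precisely the content of the paper's own hint --- the tangent vector to the fiber is obtained by inverting $DF(u)$ (restricted to $A\to B$) against the vertical direction, and the Fredholm hypothesis is what makes this inversion possible inside $A$. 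So the local, infinitesimal statement is established, and in essentially the same way the authors indicate.

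The genuine gap is the closedness of $S$, and you have named it yourself: your proof is conditional on an a priori estimate of the form $\|a\|_A\lesssim\|F(a)\|_B+\|a\|_X$ along the fiber, which is not a consequence of the hypotheses as stated (hypothesis $(H)$, dense inclusions, $C^1$ restriction $F:A\to B$, $V_X=V_Y\subset A$, Fredholmness of $DF(a):A\to B$). Without it, nothing excludes that $S$ is a proper open subinterval, say $S=(-\infty,t_+)$, with $\|\alpha(t)\|_A\to\infty$ as $t\uparrow t_+$ while $\alpha$ continues perfectly well in $X$ and $\alpha(t_+)\notin A$; openness of $S$ alone yields only a local transplant. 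Reformulating the fiber as an integral curve of the vector field $\tau(a)=\phi_p-(PDF(a)|_{H_X\cap A})^{-1}PDF(a)\phi_p$ does not remove the obstruction, since the maximal $A$-orbit is again defined on an open interval and the same blow-up alternative reappears. So what you have is a correct proof of a weaker statement (the set of fiber points lying in $A$ is relatively open in the fiber), together with an accurate diagnosis of the missing ingredient; the globalization --- whether by an elliptic-regularity bound, by properness of $PF_t$ on $H_X\cap A$, or by some uniform control of $(PDF|_{H_X\cap A})^{-1}$ along the fiber --- is exactly the part the paper defers to the companion article and must be supplied before the proposition is proved.
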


Said differently, if a point $u \in X$ belongs to $A$ then the whole fiber does.

\medskip

In the Ambrosetti-Prodi scenario, this proposition seems to be a consequence of elliptic regularity, which may be used to prove it. Regularity of eigenfunctions is irrelevant: fibers are the orbits of the vector field of their tangent vectors, which are inverses of the vertical vector under $DF(u)$, and necessarily lie in $A$ (\cite{CTZ2}). Tangent vectors are indeed eigenfunctions $\phi_p(u)$ of $DF(u)$ at critical points $u$.

\medskip

The fact that sheets and fibers are uniformly flat and steep (Proposition \ref{prop:fibers}) allows one to modify vertical spaces ever slightly and still obtain space decompositions for which the Lyapunov-Schmidt decomposition, and hence the construction of fibers in Theorem \ref{theo:Fvv}, apply. In particular, transplants may be performed even when the eigenvector $\phi_p$ originally used to define the vertical spaces $V_X= V_Y$ do not have regularity, i.e., do not belong to $A \subset X$. We only have to require that $A$ is dense in $X$, so that $\phi_p$ can be well approximated by a new vertical direction.

\subsection{Fibers and Numerics} \label{subsec:numerics}
Finite spectral interaction is a very convenient context for numerics.
Any question related to solving $F(u)= g$ for some fixed $g \in Y$ reduces to a finite dimensional problem in situations of finite spectral interaction, irrespective of additional hypotheses. If the interaction involves a simple eigenvalue $\lambda_p$, one simply has to look at the restriction of $F$ to the (one dimensional) fiber associated to the affine vertical line through $g$.

Smiley and Chun realized the implications of this fact for numerical analysis (\cite{S}, \cite{SC}). An  implementation for functions $F(u) = - \Delta u - f(u)$ defined on rectangles $\Omega \subset \RR^2$ was presented in \cite{CT}. In the forecoming sections, we will require more stringent hypotheses with the scope of obtaining very well behaved functions $F$ --- we will mostly be interested in global folds. Such additional restrictions might improve on computations, but so far this has not seen to lead to substantial improvements on the available algorithms.

\begin{figure}[ht]
\begin{center}
$\begin{array}{cc}
\epsfig{height=45mm,file=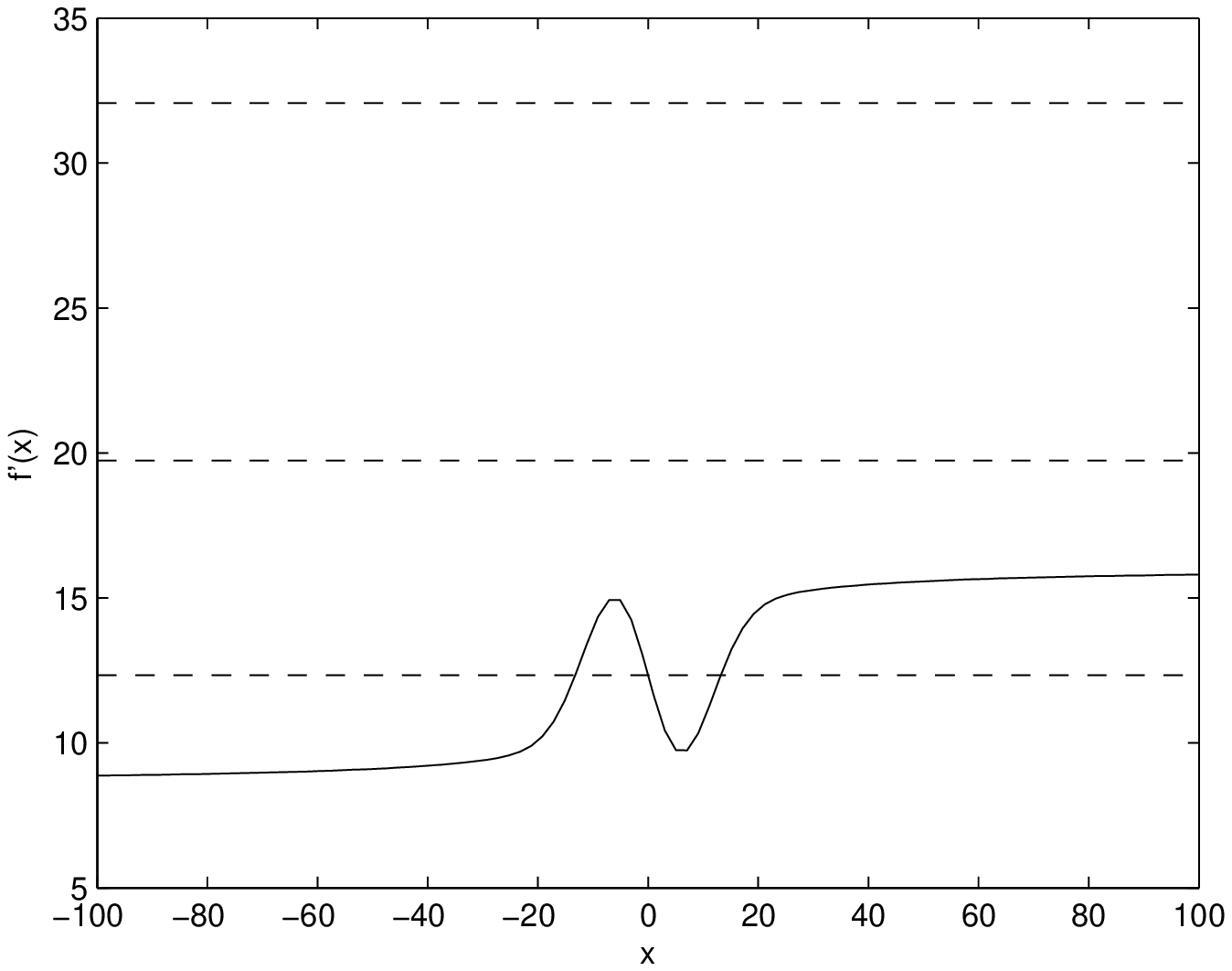} &
\epsfig{height=45mm,file=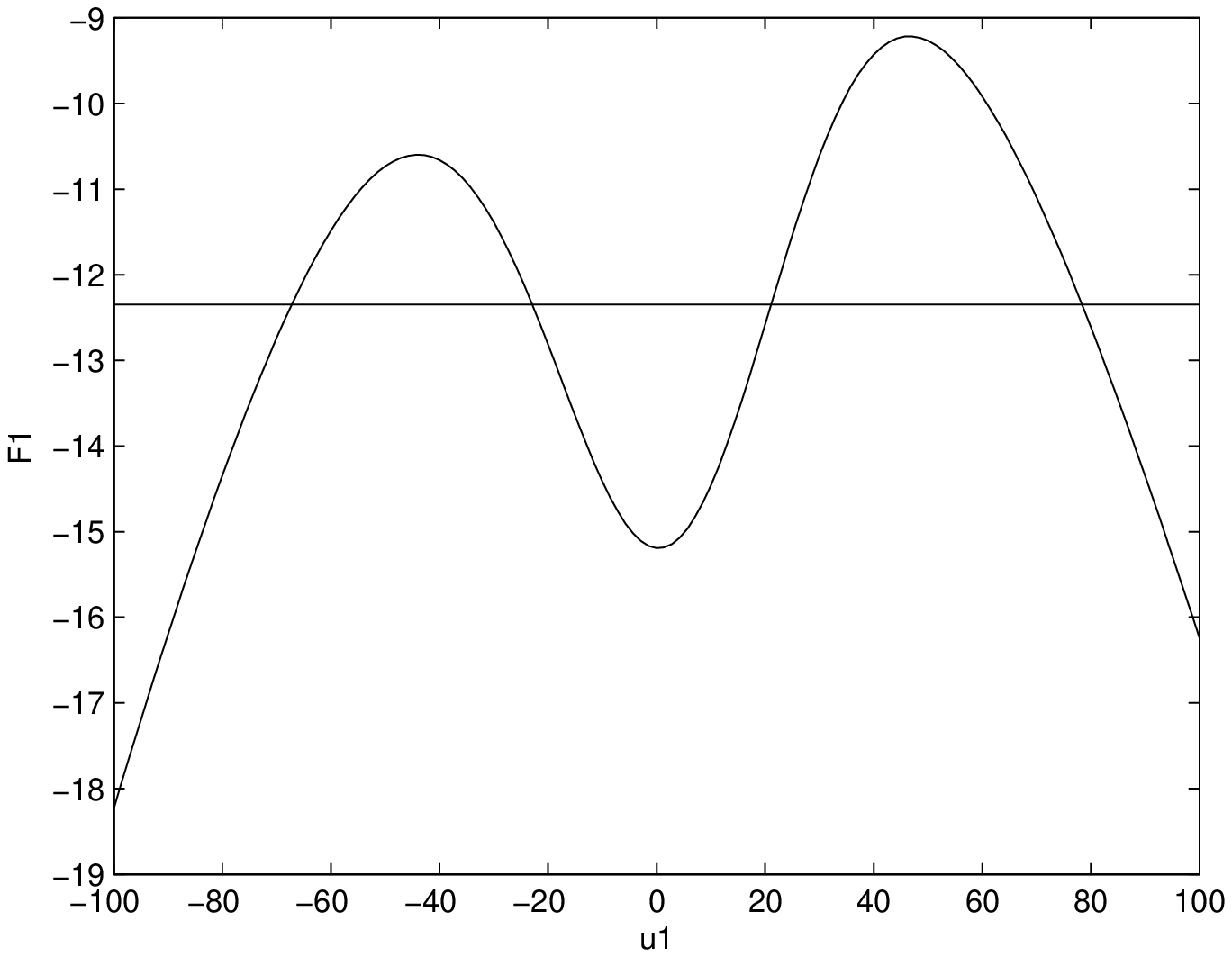}
\end{array}$
\end{center}
\label{fig:wil}
\end{figure}

We present an example obtained from programs by José Cal Neto (\cite{CT}) and Otavio Kaminski. For $\Omega = [0,1] \times [0,2]$,  $\lambda_1 \sim 12.337$ and $\lambda_2 \sim 19.739$. Consider
\[ - u_{xx} - u_{yy} - f(u) = g \ , \quad (x,y) \in \Omega \ , \quad u = 0 \ \hbox{ in } \  \partial \Omega \ , \]
\[ f'(x) = \frac{\lambda_2 - \lambda_1}{\pi} \ \big( \arctan (\frac{x}{10}) - \frac{2}{5}\  x \ e^{- (x/10)^2} \big) + \lambda_1 \ , \quad f(0) \sim 47.12 \]
\[ g(x,y) = - 100 \big( x(x-1)y^2(y-2) \big) - 35 \sin(\pi x) \sin(\frac{\pi y}{2}) \ . \]
On the left, we show the graphs of $f'$, which interacts only with $\lambda_1$. On the right, the height function $h$ associated to the fiber obtained by inverting the vertical line through $g$. The height value $-12.3$ is reached by four preimages, displayed below. Notice the cameo appearance of the maximum principle: the four graphs sit one on top of the other as one goes up along the fiber (this is very specific of interactions with $\lambda_1$ of the Laplacian with Dirichlet conditions).

\begin{figure}[ht]
\begin{center}
$\begin{array}{cc}
\epsfig{height=45mm,file=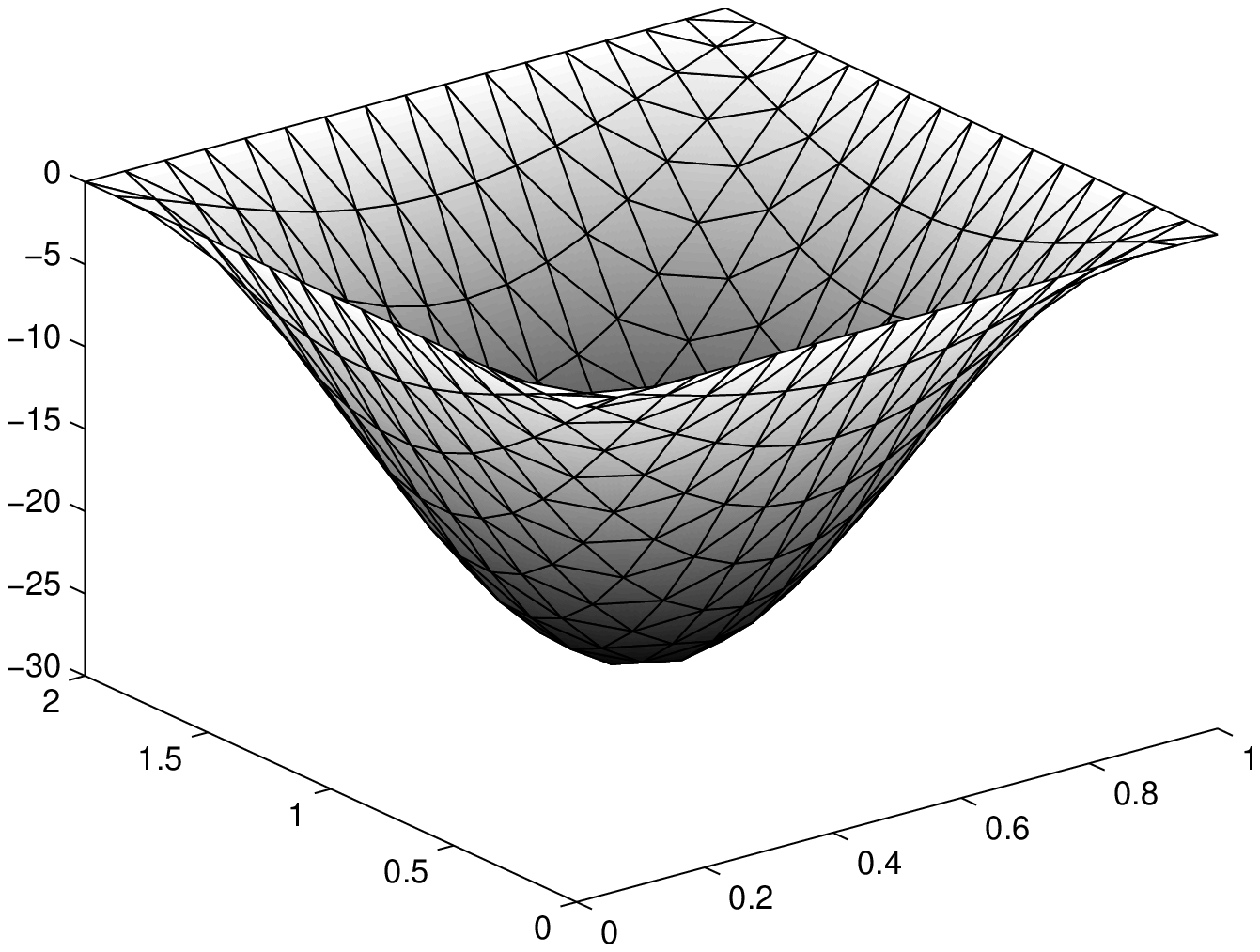} &
\epsfig{height=45mm,file=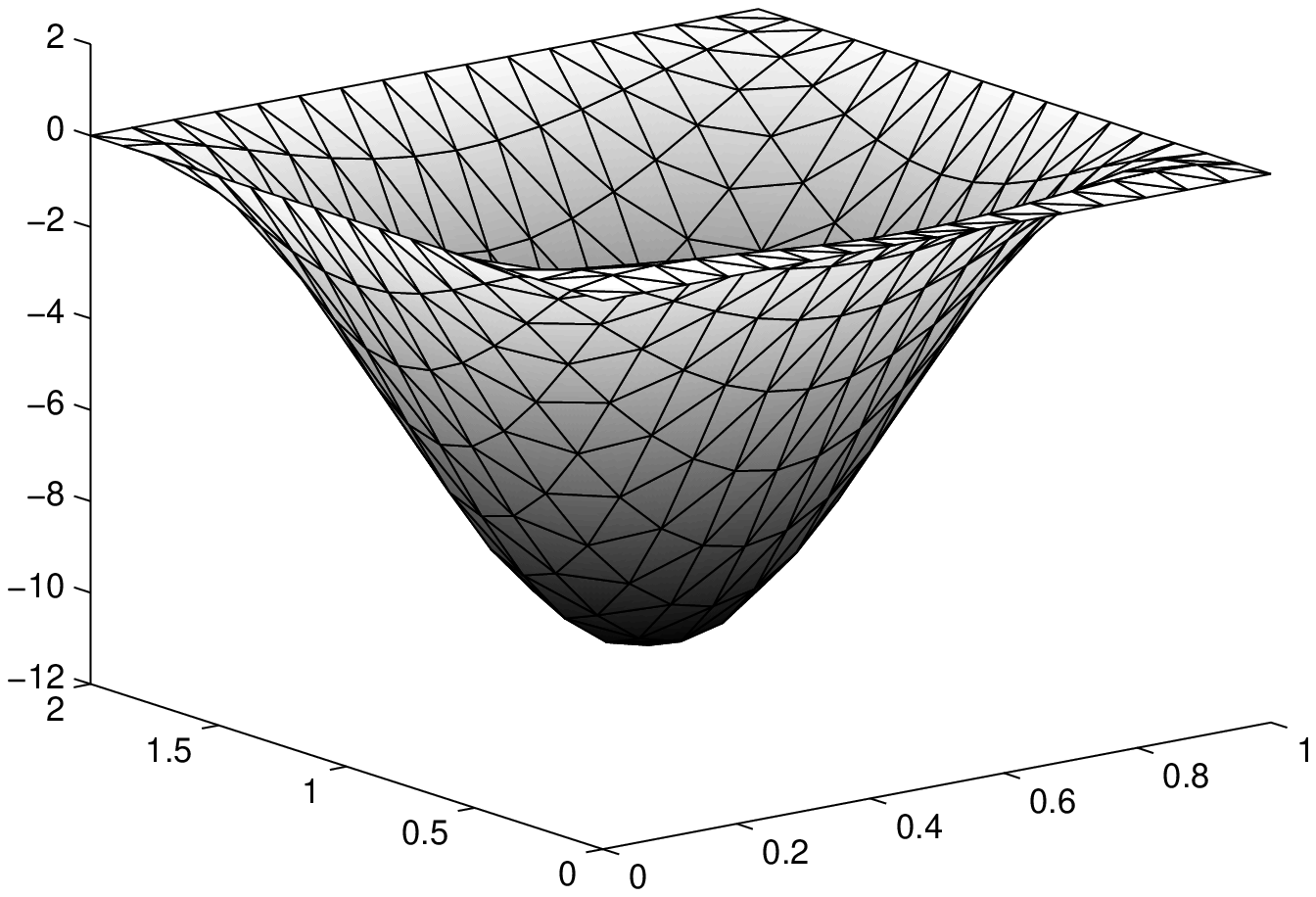} \\
\epsfig{height=45mm,file=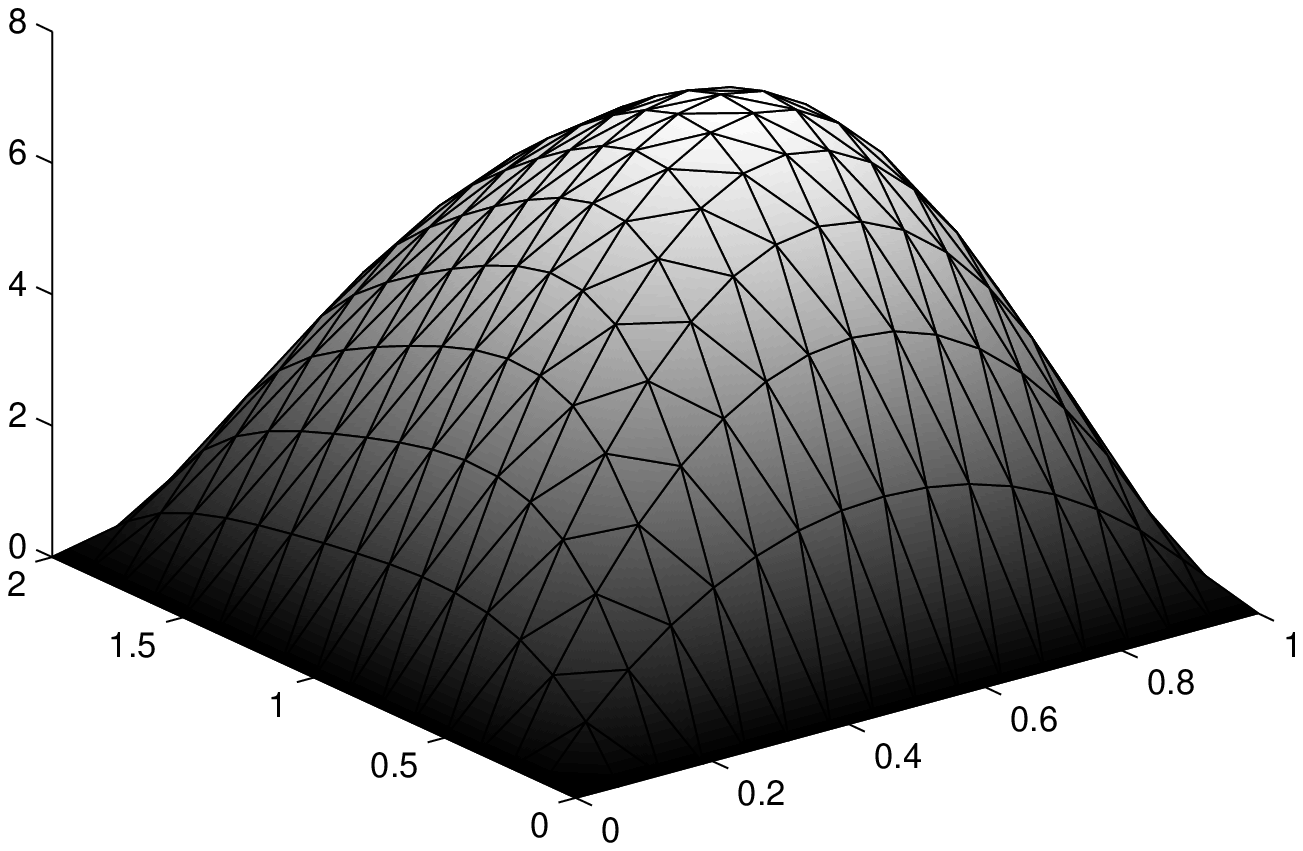} &
\epsfig{height=45mm,file=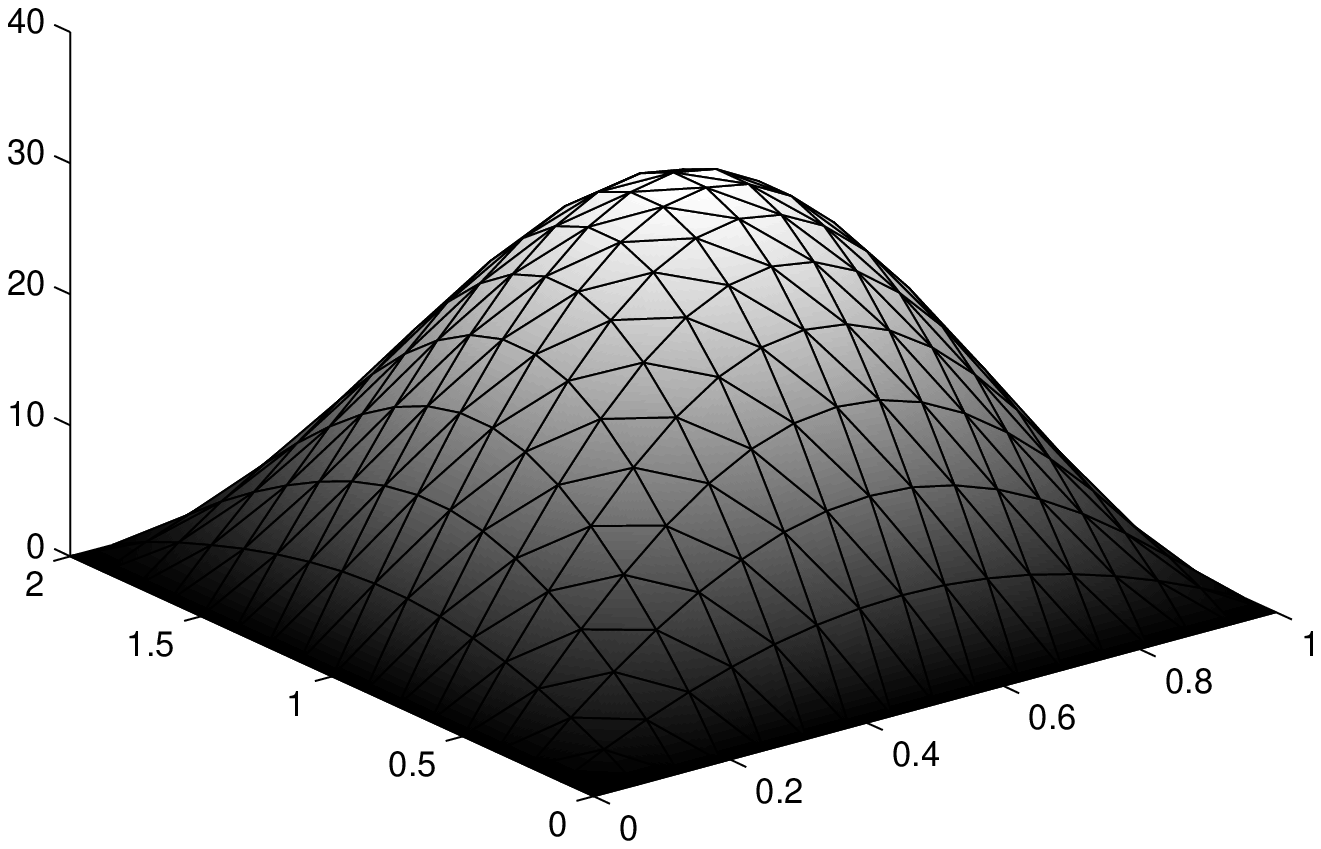}
\end{array}$
\end{center}
\label{fig:wil}
\end{figure}

%

\section{Asymptotics of $F$ on fibers and vertical lines}

We stick to one dimensional fibers and consider two issues.
\begin{enumerate}
\item How does $F$ behave at infinity along fibers?
\item How do fibers look like at infinity ?
\end{enumerate}

The first question, to say the very least, is tantamount to characterizing the image of $F$. The second is not relevant for the theoretical study of the global geometry of $F$, since a (global) coordinate system leading to a normal form (like $(z,t) \mapsto (z, - t^2)) $ is insensitive to the shape of fibers. On the other hand, for numerical purposes, a uniform behavior at infinity of the fibers is informative.

\subsection{ $F$ along fibers} \label{subsec:asy}
The inverse of a vertical line $z_0 + V_Y, z_0 \in H_Y$ is the fiber $u(z_0,t) = w(z_0,t) + t \phi_p$:
\[ F(u(z_0,t)) =   z_0 + h^a(z_0,t)\, \phi_p \ .  \eqno{(\ast)} \]
For a fixed $z_0 \in H_Y$, the $C^1$ map $ t\mapsto h^a (z_0,t)$
is the {\it adapted height function} of the fiber associated to $z_0$.
Clearly,
\[   h^a(z_0, t) =
\langle F(u(z_0, t)), \phi_p \rangle =
\langle L(w(z_0,t) + t\phi_p) - N(u(z_0,t)) , \phi_p\rangle  \]
so that
\[ h^a(z_0, t)  = \lambda_p t - \langle N(u(z_0,t)), \phi_p \rangle.
\]

In order to have
\[ \lim_{t \to \pm \infty} \langle F( u (z_0, t )), \phi_p \rangle = \lim_{t \to \pm \infty} h^a(z_0, t ) = - \infty  \]
and some uniformity convenient to obtain properness as discussed in Section \ref{subsec:proper}, we require an extra hypothesis:

\centerline{
 For each $z_0 \in X$, there is a ball $U(z_0) \subset X$ and $\epsilon, T > 0, c_\pm$ such that, for $z \in U(z_0)$,}
 \[ \langle N(u(z,t)), \phi_p \rangle > (\lambda_p  + \epsilon) t  + c_+\ , \quad \hbox{ for } t  > \,\ T \ ,  \eqno{(V+)} \]
 \[ \langle N(u(z,t)), \phi_p \rangle > (\lambda_p  - \epsilon) t  + c_- \ , \quad \hbox{ for } t  <  - T  \, . \eqno{(V-)} \]

 Notice that the asymptotic behavior on each fiber is the same.

\subsection{Asymptotic geometry of fibers}

Again, parameterize fibers as $u(z,t)=w(z,t) + t\phi_p$.
Under mild hypotheses, the vectors $w(z,t)/t$ have a limit for $t \to \pm \infty$, which is independent of $z$. A version of this result was originally obtained by Podolak (\cite{P}).

\begin{prop}
Suppose that $F: X \to Y$, $F = L - N$ satisfies hypothesis $(H)$ of Section \ref{sec:fibers}.  Suppose also that, for every $u\in X$,
$$\lim_{t\to +\infty}\frac{PN(tu)}{t} = N_\infty(u) \in Y.$$
Then there exist $w_{+}, w_{-} \in H_X$ such that, for every fiber $u(z,t)=w(z,t) + t\phi_p$,
$$\lim_{t\to +\infty}\|\frac{w(z,t)}{t} - w_+\|_X  =  0 \ , \quad \lim_{t\to -\infty}\|\frac{w(z,t)}{t} - w_-\|_X = 0$$
which are respectively the unique solutions of the equations
$$Lw - PN_\infty(w + \phi_p) = 0\ , \quad Lw + PN_\infty(-w - \phi_p) = 0.$$
\end{prop}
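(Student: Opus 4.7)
The defining equation of the fiber through $z\in H_Y$, namely $PF(u(z,t)) = z$, reduces --- since $L\phi_p = \lambda_p\phi_p \in V_Y$ implies $PL(t\phi_p)=0$ --- to
\[ Lw(z,t) \;=\; PN\bigl(w(z,t)+t\phi_p\bigr) + z. \]
My plan is to rescale by $t$, observe that hypothesis $(H)$ of Section \ref{sec:fibers} turns the rescaled equation into a uniform contraction, and read off the limit by a Banach iteration paralleling Theorem \ref{theo:Fvv}. Setting $v(z,t) = w(z,t)/t$ and dividing by $t>0$ gives
\[
Lv(z,t) \;=\; \frac{PN\bigl(t(v(z,t)+\phi_p)\bigr)}{t} + \frac{z}{t}. \qquad (\ast)
\]

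An a priori bound comes for free: with $\|L^{-1}\|_{H_Y\to H_X}\le 1/c$ and $PN_t$ Lipschitz with constant $n<c$ uniformly in $t$, the identity $w(z,t) = L^{-1}(PN(w+t\phi_p)+z)$ yields $\|w(z,t)\|_X \le (c-n)^{-1}(\|PN(0)\|+n|t|+\|z\|)$, so the rescaled $v(z,t)$ is $H_X$-bounded for $|t|\ge 1$. For the candidate limit, note that $N_\infty$ inherits Lipschitz constant $\le n$ from the pointwise limit, so $w \mapsto L^{-1}PN_\infty(w+\phi_p)$ is a contraction on $H_X$ with ratio $n/c<1$; its unique fixed point is $w_+$.

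To establish $v(z,t)\to w_+$, subtract the two fixed-point equations and insert $\pm\, PN(t(w_+ + \phi_p))/t$:
\[
v(z,t) - w_+ \;=\; L^{-1}\!\left[\frac{PN(t(v+\phi_p)) - PN(t(w_+ +\phi_p))}{t} + \frac{PN(t(w_+ +\phi_p))}{t}-PN_\infty(w_+ +\phi_p)\right] + \frac{L^{-1}z}{t}.
\]
The first bracketed term has norm $\le n\,\|v(z,t)-w_+\|_{H_X}$ by the uniform Lipschitz estimate; the second tends to $0$ as $t\to+\infty$ by the pointwise hypothesis, invoked at the single fixed argument $w_+ +\phi_p$. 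Absorbing the first term into the left-hand side yields $(1-n/c)\|v(z,t)-w_+\|_X \le o(1)$, whence $v(z,t)\to w_+$ in $X$. Independence from $z$ is automatic, since the limiting equation does not involve $z$.

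The case $t\to-\infty$ reduces to the previous one via the identity $PN(tx)/t = -PN(|t|(-x))/|t|$, which combined with the hypothesis gives $\lim_{t\to-\infty}PN(tx)/t = -N_\infty(-x)$ for each $x\in X$; rerunning the argument delivers $Lw_- = -PN_\infty(-w_--\phi_p)$, which is the claimed equation. The main subtlety I anticipate is precisely the mismatch between the pointwise nature of the hypothesis on $N_\infty$ and the moving argument $v(z,t)+\phi_p$: the Lipschitz bootstrap above is what allows one to bypass uniform convergence of $PN(t\,\cdot)/t$ on bounded sets, which is not guaranteed in infinite dimensions and would be awkward to extract directly.
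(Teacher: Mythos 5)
Your proof is correct, and it follows essentially the route the paper itself indicates (the paper defers details to \cite{CTZ1, CTZ2} but its remark that $N_\infty = PN_\infty$ inherits the Lipschitz bound of the $PN_t$, so that the limiting equations are uniquely solvable by contraction, is exactly the mechanism you exploit): rescale the fiber equation, absorb the moving argument via the uniform Lipschitz constant $n<c$, and invoke the pointwise limit only at the fixed point $w_\pm+\phi_p$. The only blemishes are cosmetic --- the Lipschitz estimate from $(H)$ is in the $Y$-norm, so one should first conclude $\|v-w_+\|_Y\to 0$ and then upgrade to the graph norm via $\|L(v-w_+)\|_Y\le n\|v-w_+\|_Y+o(1)$, and the unused a priori bound implicitly assumes Lipschitz dependence on $t$, which $(H)$ does not provide (though your additional hypothesis makes $\|PN(t\phi_p)\|=O(|t|)$ anyway).
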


It turns out that $N_\infty = PN_\infty$ satisfies the same Lipschitz bound that the functions $PN_t$ in Theorem \ref{theo:Fvv}, which is  why both equations are (uniquely) solvable.

Fibers are asymptotically vertical if and only if  $\lim_{|t| \to \infty}  w(z,t)/t = 0$, or equivalently, $PN_\infty(\pm \phi_p) =0$. Indeed, in this case, $w=0$ is the unique solution of both equations. This is what happens in the Ambrosetti-Prodi scenario, where $PN_\infty(u) = (b-\gamma) Pu^+ - (a - \gamma) Pu^-$ (recall $u = u^+ - u^-$), since $\phi_p=\phi_1 >0$.

\subsection{Comparing $F$ on  fibers and on vertical lines}

One might wish to relate the heights of $F$ along fibers and vertical lines, which are easier to handle. In \cite{P} Podolak presented a scenario in which this is possible. We state a version of her result for the case $t \to + \infty$.

\begin{theo}
Let $X\subset Y$ be Hilbert spaces with $X$ dense in $Y$. Let $L:X \to Y$ be a self-adjoint operator with $0 \in \sigma(L)$, a simple, isolated eigenvalue, associated to the normalized kernel vector $\phi_p$. Set $H_Y = \langle \phi_p \rangle^{\perp}$.
Take $N:Y \to Y$ and $F= L-N: X \to Y$ so that
\begin{enumerate}
\item $\|N(u) - N(u_0)\|_Y \leq \epsilon \|u-u_0\|_Y \ , \quad \lim_{t\to +\infty}N(tu)/t = N_\infty(u)$
\item $\langle N_\infty(\phi_p),\phi_p \rangle = - \lim_{t\to +\infty} \langle F(t\phi_p),\phi_p\rangle/ t >0$
 \item $\epsilon \ \|{ \big( L|_{H_Y}} \big)^{-1}\|< 1/2 \ , \quad \epsilon^2 \ \|{ \big( L|_{H_Y}} \big)^{-1}\| < 1/2 \ \langle N_\infty(\phi_p),\phi_p \rangle $\ .
\end{enumerate}
Then, for each fiber $(z_0,t)$ in adapted coordinates,
\[ \big| \ \lim_{t\to +\infty}\frac{h^a(z_0,t)}{t} - \langle N_\infty(\phi_p),\phi_p \rangle \big| \ < \langle N_\infty(\phi_p),\phi_p \rangle.\]

\end{theo}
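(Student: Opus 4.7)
The plan is to compare the asymptotic slope of $h^a$ along an arbitrary fiber with the slope along the vertical axis through the origin, by showing that the asymptotic tilt $w_+$ of the fiber is small in norm. Since the eigenvalue is $\lambda_p=0$, the formula from Section~\ref{subsec:asy} simplifies to $h^a(z_0,t)=-\langle N(u(z_0,t)),\phi_p\rangle$, so $h^a(z_0,t)/t = -\langle N(u(z_0,t))/t,\phi_p\rangle$. The Proposition immediately preceding the theorem yields $w(z_0,t)/t \to w_+$ in $X$ (hence in $Y$), where $w_+\in H_X$ is the unique solution of $Lw_+ = PN_\infty(w_+ + \phi_p)$. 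Combining this with the global Lipschitz bound on $N$ from hypothesis~(1) and the identity $N_\infty(0)=0$ (which follows from $N(0)/t\to 0$), a routine uniformity argument upgrades the pointwise limit $N(tu)/t\to N_\infty(u)$ into $N(u(z_0,t))/t \to N_\infty(w_+ + \phi_p)$ in $Y$. Therefore $\lim_{t\to+\infty} h^a(z_0,t)/t = -\langle N_\infty(w_+ + \phi_p),\phi_p\rangle$.

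The second step is to bound $\|w_+\|_Y$. Since $N_\infty$ inherits the Lipschitz constant $\epsilon$ from $N$, the defining equation $w_+ = (L|_{H_Y})^{-1} PN_\infty(w_+ + \phi_p)$ combined with $N_\infty(0)=0$ and the triangle inequality produces
\[ \|w_+\|_Y \;\le\; K\bigl(\|PN_\infty(\phi_p)\|_Y + \epsilon\|w_+\|_Y\bigr), \qquad K := \|(L|_{H_Y})^{-1}\|. \]
The first inequality in hypothesis~(3), $\epsilon K < 1/2$, lets the $\|w_+\|$ term on the right be absorbed, giving $\|w_+\|_Y \le 2K\|PN_\infty(\phi_p)\|_Y$; since $\|PN_\infty(\phi_p)\|_Y \le \|N_\infty(\phi_p)\|_Y \le \epsilon$, I conclude $\|w_+\|_Y \le 2K\epsilon$.

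The final step uses Lipschitz continuity of $N_\infty$ once more:
\[ \bigl|\langle N_\infty(w_+ + \phi_p) - N_\infty(\phi_p),\phi_p\rangle\bigr| \;\le\; \epsilon\|w_+\|_Y \;\le\; 2K\epsilon^{2}, \]
and the second inequality in hypothesis~(3) says precisely that $2K\epsilon^2 < \langle N_\infty(\phi_p),\phi_p\rangle$. This is the desired quantitative comparison between the asymptotic slope of $h^a$ along an arbitrary fiber and the slope $-\langle N_\infty(\phi_p),\phi_p\rangle = \lim_{t\to+\infty}\langle F(t\phi_p),\phi_p\rangle/t$ along the vertical axis identified by hypothesis~(2). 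The main (mild) obstacle is the uniform passage from the pointwise definition of $N_\infty$ to the moving argument $u(z_0,t)/t$ in the first step; this is standard given the Lipschitz control on $N$ and the convergence $w(z_0,t)/t\to w_+$ in $X$, but one must keep the relevant estimates in the $Y$-norm since $N_\infty$ is only defined as a map $Y\to Y$. The conceptual content is that hypothesis~(3) forces every fiber to remain geometrically close to the vertical axis at infinity, so that the asymptotic height behavior on each fiber mirrors the one on the axis, within an error strictly smaller than the axis slope itself.
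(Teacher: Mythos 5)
Your argument is sound, and since the paper states this theorem without proof (it is presented as ``a version of'' Podolak's result, with details deferred elsewhere), there is no in-paper argument to compare against; what you give is the natural derivation from the proposition immediately preceding the theorem. The three steps all check out: with $\lambda_p=0$ the height formula of Section 6.1 reduces to $h^a(z_0,t)=-\langle N(u(z_0,t)),\phi_p\rangle$; the splitting $\|N(u(z_0,t))/t-N_\infty(w_++\phi_p)\|_Y\le\epsilon\,\|u(z_0,t)/t-(w_++\phi_p)\|_Y+\|N(t(w_++\phi_p))/t-N_\infty(w_++\phi_p)\|_Y$ legitimizes the limit interchange; the fixed-point bound $\|w_+\|_Y\le K\epsilon/(1-K\epsilon)<2K\epsilon$ uses the first half of hypothesis (3) together with $N_\infty(0)=0$; and the Lipschitz estimate $|\langle N_\infty(w_++\phi_p)-N_\infty(\phi_p),\phi_p\rangle|\le\epsilon\|w_+\|_Y\le 2K\epsilon^2$ closes with the second half. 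Two small remarks. First, you should say explicitly that $\epsilon K<1/2$ also guarantees hypothesis $(H)$ of Section 3: for self-adjoint $L$ with $0$ isolated in the spectrum, the distance from $0$ to $\sigma(L)\setminus\{0\}$ equals $1/K$, so $\epsilon<1/(2K)$ puts you in the regime where fibers exist and the preceding proposition (existence and characterization of $w_+$) is applicable at all. Second, what you actually prove is $|\lim_{t\to+\infty}h^a(z_0,t)/t-(-\langle N_\infty(\phi_p),\phi_p\rangle)|<\langle N_\infty(\phi_p),\phi_p\rangle$, i.e.\ the comparison is with the slope $-\langle N_\infty(\phi_p),\phi_p\rangle<0$ of the height along the vertical axis, not with $+\langle N_\infty(\phi_p),\phi_p\rangle$ as the displayed conclusion literally reads. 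The displayed formula must carry a sign slip, since under the hypotheses $\lim_t h^a(z_0,t)/t$ is negative; your version is the one consistent with hypothesis (2) and with the paper's subsequent remark that $F$ on each fiber ``converges to the same infinity'' as $F$ on the axis, so you have proved the intended statement.
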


The number $\langle N_\infty(\phi_p),\phi_p \rangle$ gives the asymptotic behaviour of the height of $F$ along the vertical line through the origin.
The theorem implies that $F$ along the upper part of each fiber converges to the same infinity that $F$ along $\{ t \phi_p, \ t \ge 0 \}$.

A context in which these hypotheses apply is the  Ambrosetti-Prodi operator with a piecewise nonlinearity $f(u) = (\lambda_p + c)u^+ - (\lambda_p - c)u^-$ for a sufficiently small number $c>0$. However,  for pairs $(\lambda_p - c_1, \lambda_p + c_2), p \ne 1$ in the  Fu\v{c}ik spectrum of the (Dirichlet) negative second derivative, for which necessarily $c_1 \ne c_2$ (near $\lambda_p$), the condition involving $\epsilon^2$ does not hold and indeed the thesis is not true.

\subsection{Fibers and the properness of $F$} \label{subsec:proper}

From a more theoretical point of view, fibers circumvent the fundamental issue of deciding if $F$ is proper. For example (\cite{MST2}), the map
\[ F: C^1(\TT^1) \to C^0 (\TT^1), \quad u \mapsto u ' + \arctan (u) \]
is a diffeomorphism from the domain to the open region between two parallel planes,
\[ \big\{  \ y \in C^0 (\TT^1) \ ,  \ - \pi^2 < \int_0^{2\pi} y(\theta) d \theta < \pi^2 \ \big\} \ . \]
Indeed, fibers in this case are simply lines parallel to the vertical line of constant functions, and each is taken to such region.

Perhaps, it would be more appropriate to think of fibers as a tool to show properness (\cite{CTZ2}). As far as we know, for the Ambrosetti-Prodi map $F: X \to Y$ in unbounded domains, the properness has been proved only by making use of fibers (see Section \ref{sec:examples}).

\begin{prop} The map $F: X \to Y$ satisfying hypotheses $(H)$ of Section \ref{sec:fibers} and $(V\pm)$ above is proper if and only if the restriction of $F$ to each fiber is proper.
\end{prop}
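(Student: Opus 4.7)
The plan is to split into the two implications. The forward direction is immediate: every fiber is the $F$-preimage of a closed vertical affine line $z + V_Y \subset Y$, hence a closed subset of $X$, and the restriction of a proper map to a closed subspace is always proper.

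For the substantial converse I would work in the adapted coordinates $\Phi: H_Y \oplus \RR \to X$ of Section \ref{sec:adapted}, under which $F$ becomes $F^a(z,t)=(z,h^a(z,t))$ and fibers correspond to the vertical lines $\{z\}\times\RR$. Since $\Phi$ is a homeomorphism by Theorem \ref{theo:Fvv}, properness of $F$ is equivalent to that of $F^a$. So let $(z_n, t_n)$ be a sequence with $F^a(z_n,t_n) \to (z_\infty, s_\infty)$; automatically $z_n \to z_\infty$ in $H_Y$, and what remains is to extract a convergent subsequence from $(t_n)$.

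The crux is to use $(V\pm)$ to promote the pointwise fiber asymptotics over $z_\infty$ to a neighborhood-uniform estimate. Using $h^a(z,t) = \lambda_p t - \langle N(u(z,t)), \phi_p\rangle$ together with the lower bound in $(V+)$, we get, for $z$ in the ball $U(z_\infty)$ and $t > T$, that $h^a(z,t) < -\epsilon t - c_+$; symmetrically $(V-)$ yields $h^a(z,t) < \epsilon t - c_-$ for $t < -T$. Both estimates drive $h^a(z,t) \to -\infty$ uniformly in $z \in U(z_\infty)$. Since $z_n \in U(z_\infty)$ eventually and $h^a(z_n, t_n) \to s_\infty$ is bounded, $(t_n)$ must be bounded. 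Passing to a subsequence $t_n \to t_*$ yields $(z_n, t_n) \to (z_\infty, t_*)$ in $H_Y \oplus \RR$, and continuity of $\Phi$ transports this to $u_n \to \Phi(z_\infty, t_*)$ in $X$.

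The role of the fiber-properness hypothesis is precisely that, on a single fiber $\{z_\infty\}\times\RR$, the map $t \mapsto h^a(z_\infty,t)$ must send $|t| \to \infty$ out to infinity; the extra input from $(V\pm)$ is what makes this behavior uniform in a neighborhood of $z_\infty$, upgrading a one-dimensional condition to a global one. The main technical obstacle is the joint continuity of the inverse Lyapunov--Schmidt map $(z,t) \mapsto (PF_t)^{-1}(z) + t\phi_p$; this follows from running the parametric Banach fixed point argument of Theorem \ref{theo:Fvv} with contraction constant $n/c < 1$ uniform in $t$, so that the fixed point depends continuously on both $z$ and $t$.
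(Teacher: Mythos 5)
Your argument is correct. The paper itself contains no proof of this proposition (all details are deferred to the companion papers in preparation), but your route is exactly the mechanism the text signals when it introduces $(V\pm)$ as supplying ``some uniformity convenient to obtain properness'': pass to adapted coordinates via the bi-Lipschitz map $\Phi$, note that the horizontal component of the image converges for free, and use the neighborhood-uniform bounds $h^a(z,t) < -\epsilon t - c_+$ and $h^a(z,t) < \epsilon t - c_-$ to trap $(t_n)$ in a compact interval; the forward direction is, as you say, pure point-set topology. One remark worth making explicit: your converse never actually invokes the hypothesis that $F$ is proper on each fiber, and it cannot need it, because $(V\pm)$ already forces $h^a(z_0,\cdot)\to-\infty$ at both ends of every fiber, which together with the Lipschitz parameterization $t\mapsto u(z_0,t)$ makes every fiber restriction proper automatically. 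So under $(H)$ and $(V\pm)$ both sides of the equivalence hold unconditionally, and what you have really proved is the stronger statement that $(H)$ and $(V\pm)$ imply properness of $F$ outright; the ``if and only if'' phrasing would only carry independent weight in a formulation where the uniform hypotheses $(V\pm)$ are weakened to a condition on individual fibers. Stating that observation would sharpen, not weaken, your write-up.
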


Points in the Fu\v{c}ik spectrum of the (Dirichlet) second derivative give rise to maps $F$ which take  the half-fiber $\{ u(0,t), t \ge 0\}$ to a single point $0$ (\cite{TZ}), which shows that $F$ is not proper, although the image of every vertical line has its vertical component taken to infinity.

A possible definition of a topological degree for $F$ becomes innocuous --- the relevant information is essentially the asymptotic behavior of $F$ along each fiber.

\section{Singularities} \label{sec:singularities}

Generic singularities both of $F$ and of each height function are very special --- they are Morin singularities. Morin classified generic singularities of
functions from $\RR^n$ to $\RR^n$ whose derivative at the singularity has one dimensional kernel (\cite{M}). This is sufficient for the study of critical points of height functions on one dimensional fibers, by Proposition \ref{prop:Cc}. In order to do the same for the critical points of the whole function $F:X \to Y$,
we need an equivalent classification
for singularities of functions between infinite-dimensional spaces, which is  very similar (\cite{CDT}, \cite{MST2}, \cite{R}) --- this is how we proceed next.

\subsection{Morin theory in adapted coordinates } \label{subsec:Morin}

The first step in Morin's proof makes use of the implicit function
theorem to write such a singularity at a point $(z_0, t_0)$
in  adapted coordinates, as in Section \ref{sec:adapted}:
\[ F^a: Y = H_Y \oplus V_Y \to Y = H_Y \oplus V_Y, \quad (z,t) \mapsto (z, h^a(z,t)). \]
Say $F^a$ is $C^{k+1}$. The point $(z_0, t_0)$ is
a {\it Morin singularity of  order} $k$  if and only if
\begin{enumerate}
\item $D_t h^a(z_0, t_0) = \cdots = D_t^k h^a(z_0, t_0) = 0$,
$D_t^{k+1} h^a(z_0, t_0) \ne 0$.
\item  The Jacobian
$D(h^a, D_t h^a, \ldots, D_t^{k-1} h^a)(z_0, t_0)$ has maximum rank.
\end{enumerate}
Then, in a neighborhood of $(z_0, t_0)$ there is an additional change of variables which converts $F^a$ to the normal form
\[ (\tilde z, x, t) \mapsto (\tilde z, x, t^{k+1} + x_1 t^{k-1} + \cdots + x_{k-1} t ) \ . \]
Here the coordinates $(\tilde z, x)$ correspond to an appropriate splitting of $Y = \tilde Y \oplus \RR^{k-1}$.

Morin singularities of order 1, 2, 3 and 4 are called, respectively, folds, cusps, swallowtails and butterflies.

Thus, the classification of critical points of $F$ boils down to the study of a family of one dimensional maps, the height functions restricted on fibers. The first requirement  is specific to each fiber (i.e., one checks it for every fixed $z$ near $z_0$), whereas the second relates nearby fibers, i.e., one has to change $z$. Folds are structurally simpler than deeper singularities: the behavior along fibers near a fold point is always the same --- essentially like $t \mapsto -t^2$, whereas this is not the case for cusps, where close to $t \mapsto t^3$ one finds $t \mapsto t^3 \pm \epsilon t$.

\medskip
There is something unsatisfying in the fact that the relevant properties of the critical points of $F$ requires knowledge of some version of the height function. This is circumvented by the next result (\cite{CTZ2}).

\begin{prop} \label{spectralsing}
Suppose $F: X \to Y$ is $C^{k+1}$ and admits one dimensional fibers.
Then there is an open neighborhood $U$ of the critical set $C$ with the  properties below.
\begin{enumerate}
\item There is a unique $C^k$ map $\lambda_p: U \to \RR$ for which $\lambda_p = 0$ on $C$ and is an eigenvalue of $DF$ elsewhere.
\item There is a strictly positive $C^k$ function $p : U \to \RR^+$ such that
\[ \lambda_p (u(z, t)) = p(u(z,t)) \ D_t h(u(z, t)) \ ,  \quad u(z,t) \in U \ . \]
\end{enumerate}

A point $u_0 = u(z_0,t_0)$ is a Morin singularity of order $k$ of $F$ if and only if
\begin{enumerate}
\item $ \lambda_p(u_0) = \cdots = D_t^{k-1} \lambda_p(u_0) = 0$  ,
$D_t^{k} \lambda_p(u_0) \ne 0$ ,
\item  The image of
$D(\lambda_p, \ldots, D_t^{k-2} \lambda_p)(u_0)$ together with $D_t \lambda_p(u_0)$ span $\RR^n$.
\end{enumerate}
\end{prop}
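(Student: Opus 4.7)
The plan is to construct $\lambda_p$ by spectral perturbation of the simple isolated eigenvalue of $DF$ at critical points, derive the factorization $\lambda_p = p \cdot D_t h$ by pairing the fiber identity against a left eigenvector, and then translate the classical Morin conditions to their spectral form by differentiating the factorization.

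First, at any $u_0 \in C$, the fiber tangent $\tau(u_0)$ spans $\Ker DF(u_0)$: differentiating the fiber identity $F(u(z_0, t)) = z_0 + h^a(z_0, t)\,\phi_p$ in $t$ gives $DF(u)\tau(u) = D_t h(u)\,\phi_p$, and one-dimensionality of fibers together with Fredholmness makes $0$ a simple isolated eigenvalue of $DF(u_0)$. Analytic perturbation theory (Kato) produces, on a neighborhood of $u_0$, a unique $C^k$ eigenvalue branch $\lambda_p$ together with a $C^k$ right eigenvector $\phi_p(u)$ (normalized by $\phi_p(u_0) = \tau(u_0)$) and left eigenvector $\psi_p(u)$ satisfying $\langle \phi_p(u), \psi_p(u)\rangle = 1$; covering $C$ by such neighborhoods yields the open set $U$ asserted in (1). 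For the factorization, pair the identity $DF(u)\tau(u) = D_t h(u)\,\phi_p$ with $\psi_p(u)$, so that the complementary spectral component is annihilated:
\[ D_t h(u)\,\langle \phi_p, \psi_p(u)\rangle = \lambda_p(u)\,\langle \tau(u), \psi_p(u)\rangle. \]
Hence one takes $p(u) = \langle \phi_p, \psi_p(u)\rangle / \langle \tau(u), \psi_p(u)\rangle$; on $C$, $\tau = \phi_p(u)$ so both inner products agree and $p = 1$, and by continuity $p$ is $C^k$ and strictly positive after possibly shrinking $U$.

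For the Morin characterization, iterate Leibniz on $\lambda_p = p \cdot D_t h$: if $D_t^i h(u_0) = 0$ for $1 \le i \le j$, then $D_t^{j-1}\lambda_p(u_0) = 0$ and $D_t^j \lambda_p(u_0) = p(u_0)\,D_t^{j+1}h(u_0)$. Thus the Morin vanishing pattern of order $k$ for $h$ on the fiber is equivalent to the vanishings $\lambda_p(u_0) = \cdots = D_t^{k-1}\lambda_p(u_0) = 0$ with $D_t^k \lambda_p(u_0) \ne 0$, giving spectral condition (1). Differentiating the Leibniz expansion in an arbitrary direction shows that at a Morin point of order $k$ each $D(D_t^j \lambda_p)(u_0)$ is a triangular linear combination of $\{D(D_t^m h)(u_0) : 1 \le m \le j+1\}$ with leading coefficient $p(u_0) \ne 0$. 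Consequently
\[ \mathrm{span}\{D\lambda_p, D(D_t \lambda_p), \ldots, D(D_t^{k-2}\lambda_p)\}(u_0) = \mathrm{span}\{D(D_t h), \ldots, D(D_t^{k-1}h)\}(u_0), \]
and Morin's maximum-rank condition on $(h, D_t h, \ldots, D_t^{k-1}h)$ converts into spectral condition (2), the entry $D_t\lambda_p(u_0)$ supplying the transverse datum that in the original Morin statement is furnished by $Dh(u_0)$.

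The main technical hurdle is this last translation: differentials of $\lambda_p$ and its $t$-derivatives recover only the differentials of $D_t^i h$ with $i \ge 1$, so $Dh(u_0)$ itself — the first row in Morin's Jacobian — must be restored from intrinsic spectral data. The bookkeeping is eased by the observation that at a Morin point the $\partial_t$-components of all the relevant differentials vanish, reducing linear independence to a statement about horizontal components, but formulating maximum rank invariantly, independent of the reference vertical $\phi_p$ used to build fibers, is the step that requires most care.
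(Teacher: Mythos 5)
The paper itself defers the proof of this proposition to \cite{CTZ2} and only sketches the intended route, which your proposal follows faithfully: the fiber tangent $\tau(u)$ is the preimage of the vertical vector under $DF(u)$ and becomes an eigenfunction on $C$, the eigenvalue branch comes from perturbation of a simple isolated eigenvalue, and the Morin conditions are transferred from the height function to $\lambda_p$ through the factorization $\lambda_p = p\, D_t h$. The architecture is right, but two steps have genuine gaps.

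First, the claim that one-dimensionality of fibers together with Fredholmness makes $0$ a \emph{simple, isolated} eigenvalue of $DF(u_0)$ does not follow from those two facts alone: a one-dimensional kernel only gives geometric multiplicity one, and a Jordan block (i.e.\ $\tau(u_0) \in \Ran DF(u_0)$) would destroy both algebraic simplicity and the existence of a single $C^k$ branch $\lambda_p$. What rules this out is Proposition \ref{prop:fibers}: $\Ran DF(u_0)$ contains the tangent space of the sheet through $F(u_0)$, which is uniformly flat (a graph over $H_Y$ of small slope), while $\tau(u_0)$ is uniformly steep, so $\tau(u_0) \notin \Ran DF(u_0)$; isolation of $0$ uses the spectral gap in hypothesis $(H)$, which survives the horizontal part of $DN(u)$. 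The same angle estimates are what actually make your $p$ strictly positive: your sentence ``on $C$, $\tau = \phi_p(u)$ so both inner products agree and $p=1$'' conflates the fixed vertical vector $\phi_p$ appearing in the numerator $\langle \phi_p, \psi_p(u)\rangle$ with the perturbed eigenvector $\phi_p(u_0) = \tau(u_0)$ in the denominator. These are different vectors and the pairings need not agree; what is true, and what you need, is that both pairings are bounded away from zero with the same sign by the uniform flatness/steepness, so $p$ is positive on a neighborhood of $C$.

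Second, the Leibniz computation correctly settles condition (1), but the equivalence of the two rank conditions --- the substance of condition (2) --- is set up and then explicitly left open (``the step that requires most care''). The missing observation is that $Dh(u_0)$ does not in fact need to be restored: in adapted coordinates the $z$-component of $F^a$ is the identity, so the row $Dh^a(z_0,t_0)$ contributes nothing to the transversality underlying Morin's normal form (it even vanishes at the origin of the model $t^{k+1} + x_1 t^{k-1} + \cdots + x_{k-1} t$). The list of differentials that must be independent is $D(D_t h^a), \ldots, D(D_t^{k} h^a)$, and your triangular relation, pushed one step further to $j = k-1$ (all lower-order $D_t^m h^a(u_0)$ vanish, so $D(D_t^{k-1}\lambda_p)(u_0)$ equals $p(u_0) D(D_t^{k} h^a)(u_0)$ modulo the previous rows), shows that this span coincides with that of $D\lambda_p, \ldots, D(D_t^{k-1}\lambda_p)$ at $u_0$. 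That is a finite computation, but until it is written out the proposal proves only half of the stated equivalence.
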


There is an analogous characterization in adapted coordinates.

\subsection{Critical points of the height function}

Consider a critical point $u_0 \in C \subset X$ and the fiber $u(z_0, t)$ through it, $u(z_0, t_0 ) = u_0$. From Proposition \ref{spectralsing}, $u_0$ is a (topological) fold of the height function $h$ restricted to the fiber if and only if $u_0$ is a topologically simple root of $\lambda_p(u)$ along the fiber, i.e., $\lambda_p$ is strictly negative on one side of $u_0$ and strictly positive on the other.

Once we reduce the issue to checking an eigenvalue along a fiber, {\it derivatives are irrelevant}: just study the quadratic form of the Jacobian. Clearly, this only handles topological equivalence between the function and a fold.

More explicitly, in standard Ambrosetti-Prodi contexts, $\lambda_1(u_0)$ is the minimum value of the quadratic form $\langle DF(u_0) v, v \rangle$. The derivative $D_t u(z_0,t_0)$ of the ($C^1$) fiber  is the  eigenfunction $\phi_1(u_0) >0$, and it is easy to check that $\lambda_1$ increases with $t$ by the convexity of the nonlinearity $f$. This should be compared with differentiability arguments, which require some estimate on $\phi_1(u_0)$ (say, boundedness).

The fact that all critical points are local maxima for height functions on fibers, as required in Proposition \ref{prop:folds}, suggest hypotheses to be checked only on the critical set of $F$. This is not the case in the original Ambrosetti-Prodi theorem: the statement of the theorem has the merit that it makes no reference to the critical set at all, an object which in principle is hard to identify. The convexity of the nonlinearity handles the difficulty and, rather surprisingly, is essentially necessary (\cite{CTZ1}). Further examples yielding local maximality are somewhat contrived.

\section{Some examples} \label{sec:examples}

\subsection{The non-autonomous case}

The geometric formulation $F = L - N$ is not sufficient to accomodate situations of the form $F(u(x)) = - \Delta u(x) + f(x, u(x))$, the so called non- autonomous case.
Hammerstein (\cite{H}) had already considered  homeomorphisms of that form. A possibility is requiring that  $X$ and $Y$ are function spaces defined on a domain $\Omega$, so that the variable $x$ makes sense. The formalism above carries over to this scenario without surprises.

More precisely, as usual $X$ and $Y$ are Hilbert spaces, $X$ dense in $Y$. The linear operator $L: X \subset Y \to Y$ is self-adjoint with a simple eigenvalue $\lambda_p$ associated to a normalized eigenvector $\phi_p$. Let $P: Y \to H_Y = \langle  \phi_p \rangle^\perp $ be the orthogonal projection.

From the nonlinear term $N: \Omega \times Y \to Y$, define as before $PN_t: H_Y \to H_Y, t \in \RR$ and require a Lipschitz estimate,
\[ \| PN_t(x, w_1) - PN_t(x,w_0) \|_Y \le n \|w_1 - w_0\|_Y, \quad \hbox{ for } w_0, w_1 \in H_Y , \]
so that $[-n,n] \cap \sigma(L) = \{ \lambda_p \}$, which is the same hypothesis $(H)$ in Section \ref{sec:fibers}. This obtains fibers for $F: X \to Y$ as in Theorem \ref{theo:Fvv}, which satisfy the same properties as those in the autonomous case, in particular, Proposition \ref{prop:fibers}.

The hypothesis which obtain appropriate asymptotic behavior of $F$ along fibers are the obvious counterparts of $(V+)$ and $(V-)$ in Section \ref{subsec:asy}. For the classification of critical points, we simply do not distinguish between the autonomous and non-autonomous case: the subject has become a geometric issue.

\subsection{Schr\"odinger operators on $\RR^n$}\label{subsec:physics}

As was surely known by Ambrosetti and Prodi (and \cite{B} is an interesting example), the Laplacian with Dirichlet conditions might be replaced by more general self-adjoint  operators. The approach in this text is flexible enough to handle nonlinear perturbations of Schr\"odinger operators on unbounded domains yielding global folds. In our knowledge there are no similar results in the literature. Tehrani (\cite{Te}) obtained counting results for Schr\"odinger operators in $\RR^n$ in the spirit of those obtained by Podolak (\cite{P}), indicated in Section \ref{subsec:podolak} .

We state the by now natural hypotheses. Here $Y = L^2(\RR^n)$.

\begin{enumerate}
\item The free operator $ T = -\Delta + v(x): X \subset Y \to Y$ is self-adjoint, with simple, isolated, smallest eigenvalue $\lambda_1$ and positive ground state $\phi_1$.
\item $F: X \subset Y \to Y, F(u) = Tu - f(u)$ is a $C^1$ map.
\item The function $f \in C^2(\RR)$ satisfies $f(0)=0$, $M \geq f''>0$,
$f'(\RR)=(a,b)$ and $ a < \lambda_1 < b  < \min\{ \sigma(T) \setminus \{ \lambda_1\} \}$.
\item The Jacobians $DF(u): X \to Y$ are self-adjoint operators  with eigenpair $(\lambda_1(u),\phi_1(u))$ sharing the properties of $(\lambda_1,\phi_1)$.
\end{enumerate}

\begin{theo}\label{theo:APBS}
Under these hypotheses, the map $F:X \to Y$ is a global fold.
\end{theo}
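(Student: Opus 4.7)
The plan is to verify the three-step recipe of Section 4 (fibers, asymptotics, fold-type critical points) and then invoke Proposition \ref{prop:folds}. The key simplifying feature is that hypothesis (4) supplies, globally, the eigenpair $(\lambda_1(u),\phi_1(u))$ whose existence is only guaranteed near the critical set by Proposition \ref{spectralsing}; this lets us transcribe the standard Ambrosetti-Prodi arguments to the unbounded domain $\RR^n$, replacing compactness-based inputs by spectral ones.

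\textbf{Step 1: fibers.} Set $\gamma = (a+b)/2$, $L = T - \gamma$ and $N(u) = f(u) - \gamma u$. Since $a<f'<b$, the Nemitskii map $N$ is Lipschitz on $Y = L^2(\RR^n)$ with constant $n \le (b-a)/2$, and the shift puts $\lambda_1-\gamma$ in the interval $[-n,n]$ while the assumption $b<\min(\sigma(T)\setminus\{\lambda_1\})$ keeps all other spectrum out of $[-n,n]$. Hypothesis $(H)$ of Section \ref{sec:fibers} thus holds, so Theorem \ref{theo:Fvv} stratifies $X$ into one-dimensional fibers $u(z,t)=w(z,t)+t\phi_1$, with $w(z,t)\perp\phi_1$.

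\textbf{Step 2: asymptotics along fibers.} Convexity gives, for each $\epsilon\in(0,\min(\lambda_1-a,b-\lambda_1))$, real numbers $s_\pm$ with $f'(s_+) = \lambda_1+\epsilon$ and $f'(s_-) = \lambda_1-\epsilon$, and the tangent-line inequalities $f(s) \ge (\lambda_1+\epsilon)s + C_+$ and $f(s) \ge (\lambda_1-\epsilon)s + C_-$ for every $s\in\RR$. Pairing with $\phi_1>0$ and using $\langle u(z,t),\phi_1\rangle = t$ yields
\[\langle F(u(z,t)),\phi_1\rangle = \lambda_1 t - \langle f(u(z,t)),\phi_1\rangle \le -\epsilon|t| + C\]
for $|t|$ large, uniformly on bounded sets of $z$. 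This both verifies $(V\pm)$ of Section \ref{subsec:asy} and secures $h^a(z,t)\to -\infty$ as $t\to\pm\infty$; combined with Step 1 and the uniformity, the restriction of $F$ to each fiber is proper, hence $F$ itself is proper by the last proposition of Section \ref{subsec:proper}.

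\textbf{Step 3: folds via the Jacobian.} At any critical point $u_0 = u(z_0,t_0)$, the tangent to the fiber lies in $\ker DF(u_0) = \langle\phi_1(u_0)\rangle$, so by Proposition \ref{spectralsing} it suffices to show $D_t\lambda_1(u_0)\ne 0$ and, for Proposition \ref{prop:folds}, that the sign forces a local maximum of $h$. Differentiating the eigenvalue identity (Feynman-Hellmann) with $DF(u)[v]w = -f''(u)vw$ and tangent vector $\phi_1(u_0)$ gives
\[D_t\lambda_1(u_0) = -\int_{\RR^n} f''(u_0)\,\phi_1(u_0)^3\,dx < 0\]
by hypothesis (3) and the positivity of $\phi_1(u_0)$. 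Since the factor $p$ of Proposition \ref{spectralsing} is strictly positive, $D_t^2 h^a(z_0,t_0)<0$, so every critical point along every fiber is an isolated local maximum; together with Step 2 this forces exactly one critical point per fiber. Proposition \ref{prop:folds} now concludes that $F$ is a global fold.

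\textbf{Where the difficulty sits.} Steps 1 and 3 are straightforward given hypothesis (4); the genuinely non-trivial point is Step 2, namely showing that $(V\pm)$ hold uniformly on $\RR^n$ without any compactness, and that this uniformity transfers to properness. The convexity inequality above is the right substitute for the maximum-principle/$L^\infty$-bound arguments used in bounded domains, and hypothesis (4), ensuring that the ground-state decomposition behaves continuously along the whole fiber, is what allows one to globalise the local spectral picture underlying Proposition \ref{spectralsing}.
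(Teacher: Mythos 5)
The paper states Theorem \ref{theo:APBS} without proof (deferring details to \cite{CTZ2}), but your argument is exactly the three-step program of Sections 4--7 that the paper prescribes for it: hypothesis $(H)$ via the shift by $\gamma=(a+b)/2$, the tangent-line convexity estimate giving $(V\pm)$ and properness, and the sign of $D_t\lambda_1(u_0)=-\int f''(u_0)\,\phi_1(u_0)^3\,dx<0$ at critical points, which is the paper's own convexity-plus-positive-ground-state mechanism from Section 7.2. The one point worth making explicit in Step 2 is that pairing the constant $C_\pm$ of the tangent-line inequality against $\phi_1$ over the unbounded domain uses $\phi_1\in L^1(\RR^n)$, which follows from the exponential decay of an isolated ground state below the rest of the spectrum and holds in the stated examples.
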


Such hypotheses are satisfied for $v(x) = x^2/2$, the one dimensional quantum harmonic oscillator, as well as for the hydrogen atom in $\RR^3$, for which $v(x) = -1/|x|$.

Hypotheses on the potential of a Schr\"odinger operator in order to obtain such properties are commonly studied in mathematical physics. The interested reader might consider \cite{BS}, \cite{LL}, \cite{RS}. More about this in \cite{CTZ2}.

\subsection{Perturbations of compact operators} \label{subsec:compact}

We recall Mandhyan's second example of a global fold (\cite{M2}), or better, a special case of the extension given by Church and Timourian (\cite{CT1}).

For $\Omega \subset \RR^n$ a compact subset, let $X= C^0(\Omega)$ and define the compact operator
\[ K: X \to X, \quad K(u)(x) =  \int_\Omega k(x,y) u(y) dy \]
 where the kernel $k \in C^0(\Omega \times \Omega)$ is symmetric and positive.  Let $\mu_1 > \mu_2$ be the largest eigenvalues of $K$.
Now let $f: \RR \to \RR$ be a strictly convex $C^2$ function satisfying
\[ 0 < \lim_{x \to - \infty} f' (x) < 1/\mu_1  < \lim_{x \to  \infty} f' (x) < 1/|\mu_2| \ .\]

\begin{theo} Under these hypotheses for $K$ and $f$, the map
\[ G: X \to X, \quad G(u)(x)= u(x) - K f(u(y))   \]
is a global fold.
\end{theo}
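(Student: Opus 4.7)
The strategy is the three-step program of Section~\ref{sec:adapted}: produce fibers, verify the asymptotic behaviour of the height function along each fiber, and classify the critical points of the height function as local maxima. I first run the argument in the Hilbert space $Y=L^2(\Omega)$, to which $G$ extends naturally ($f$ is globally Lipschitz and $K:L^2\to L^2$ is compact), and then transplant back to $X=C^0(\Omega)$.

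\textbf{Fibers and asymptotics.} Let $\phi_1>0$ be the $L^2$-normalized Krein--Rutman eigenvector of $K$, set $V=\langle\phi_1\rangle$, $H=V^{\perp}$, and let $P$ be the orthogonal projection of $Y$ onto $H$. Self-adjointness of $K$ with $K\phi_1=\mu_1\phi_1$ gives $PK=KP$ and $\|PK\|_{L^2\to L^2}=\|K|_H\|=|\mu_2|$. Writing $u=w+t\phi_1$ with $w\in H$, the equation $PG(u)=g_H$ is the fixed-point identity $w=g_H+PKf(w+t\phi_1)$; its right-hand side is a contraction in $w$ of constant $|\mu_2|\,\|f'\|_\infty\le b|\mu_2|<1$ by the hypothesis $b<1/|\mu_2|$. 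This yields a unique $w(g_H,t)$ and so a one-dimensional fiber $u(g_H,t)=w(g_H,t)+t\phi_1$ over every $g_H$, in the spirit of Theorem~\ref{theo:Fvv}. Decomposing $G=L-N$ with $L=I-\mu_1^{-1}K$ (self-adjoint, with $0$ a simple isolated eigenvalue and eigenvector $\phi_1$) and $N(u)=K(f(u)-u/\mu_1)$, one computes $PN_\infty(\pm\phi_1)=0$ using $f(ts)/t\to bs^{+}-as^{-}$, and the asymptotic fiber proposition of Section~6 then gives $w(g_H,t)/t\to 0$, i.e.\ $u(g_H,t)/t\to\phi_1$ as $t\to\pm\infty$. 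The height function $h(g_H,t)=t-\mu_1\int_\Omega f(u(g_H,t))\phi_1\,dx$ therefore satisfies
\[ h(g_H,t)/t\longrightarrow 1-b\mu_1<0 \ \text{as}\ t\to+\infty, \qquad h(g_H,t)/t\longrightarrow 1-a\mu_1>0\ \text{as}\ t\to-\infty, \]
so $h(g_H,t)\to-\infty$ in both directions uniformly on bounded sets of $g_H$, giving the asymptotic hypothesis of Proposition~\ref{prop:folds}.

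\textbf{Critical points are local maxima.} The Jacobian $DG(u_0)v=v-K(f'(u_0)v)$ is a compact perturbation of the identity, hence Fredholm of index zero. Since $f'(u_0)>0$, the conjugate $A(u_0)=M_{\sqrt{f'(u_0)}}\,K\,M_{\sqrt{f'(u_0)}}$ is compact, self-adjoint, with positive integral kernel, so by Krein--Rutman its top eigenvalue $\mu(u_0)>0$ is simple with strictly positive eigenvector $\tilde v_1>0$. Hence $\lambda_1(u_0)=1-\mu(u_0)$ is the simple smallest eigenvalue of $DG(u_0)$, with positive eigenvector $v_1=f'(u_0)^{-1/2}\tilde v_1>0$, placing $G$ in the setup of Proposition~\ref{spectralsing}. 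At a critical point $u_0=u(g_H,t_0)$ the fiber tangent $D_tu$ is a positive multiple of $v_1$, and a Hellmann--Feynman calculation yields
\[ \left.\frac{d}{dt}\mu(u(g_H,t))\right|_{t_0}=\int_\Omega f''(u_0)\,(D_tu)\,v_1^2\,dx > 0 \]
by strict convexity $f''>0$. Therefore $D_t\lambda_1=-D_t\mu<0$ at every critical point, and the factorization $\lambda_1=p\,D_th$ ($p>0$) from Proposition~\ref{spectralsing} shows that $D_th$ changes strictly from positive to negative there; every critical point of $h$ on every fiber is an isolated strict local maximum. Proposition~\ref{prop:folds} now concludes that the $L^2$-extension of $G$ is a global fold.

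\textbf{Transplant and main obstacle.} On $X=C^0(\Omega)$ the map $G$ is $C^1$ with $DG(u)=I-KM_{f'(u)}$ again Fredholm of index zero, and a bootstrap ($Kf(u)\in C^0$ since $K$ has continuous kernel) shows that any $L^2$-fiber meeting $C^0$ lies entirely in $C^0$; since every $u\in C^0$ lies on such a fiber, the transplanting proposition of Section~\ref{sec:fifibers} descends the factorization by $(z,-t^2)$ to $X$. The main obstacle is the critical-point step: sustaining the simplicity and positivity of $\lambda_1$ along every fiber and converting the convexity of $f$ into the strict monotonicity $D_t\lambda_1<0$ via the Hellmann--Feynman computation; the rest is a careful bookkeeping of the spectral bounds supplied by $0<a<1/\mu_1<b<1/|\mu_2|$.
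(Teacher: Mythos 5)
Your proposal is correct and implements exactly the strategy the paper prescribes for this example: construct fibers by the contraction argument of Theorem~\ref{theo:Fvv} in $L^2$, verify that the height function tends to $-\infty$ at both ends of each fiber, show via Krein--Rutman positivity and the convexity of $f$ that every critical point is an isolated local maximum (Propositions~\ref{spectralsing} and~\ref{prop:folds}), and then transplant the fold structure to $C^0(\Omega)$, which is precisely the route the paper says is convenient here. The only loose ends are bookkeeping ones the paper itself leaves implicit, e.g.\ the identification $\|K|_{\langle\phi_1\rangle^\perp}\|=|\mu_2|$ and the fact that the splitting $L=I-\mu_1^{-1}K$ need only be used to compute $N_\infty$, the fiber construction resting on the direct contraction with constant $b\,|\mu_2|<1$.
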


This is the kind of nonlinear map obtained if one started from the Ambrosetti-Prodi original operator $F(u) = - \Delta u - f(u)$ and inverted the Laplacian. Actually, one could take another track: instead of inverting the linear part, one might consider the inversion of the nonlinear map $u \mapsto f(u)$, since $f' $ is bounded away from zero. For maps $G(u) = Ku - f(u)$ obtained this way, we handle the case when $K$ is a general compact symmetric operator $K$.

More precisely, let $\Omega \subset \RR^n$, $B= C^0(\Omega)$ and $ Y = L^2( \Omega)$.
Let $K:B \to B$ and $K:Y\to Y$ be compact operators which preserve the cone of positive functions. Also,  $K:Y\to Y$ has simple largest eigenvalue $\lambda_p = \|K \|$ and second largest eigenvalue $\lambda_s$ . Let $f: \RR \to \RR$ be a strictly convex $C^2$ function, with $f(0)=0$ if $\Omega$ is unbounded. Suppose
\[               \lambda_s < a = \lim_{t \to - \infty} f'(t) < \lambda_p < b = \lim_{t \to \infty} f'(t) \ . \]

\begin{theo}\label{theo:APC}
The map $ F: B \to B \ ,  F(u) = Ku - f(u) $
is a global fold.
\end{theo}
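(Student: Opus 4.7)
The plan is to carry out the three-step recipe of Section \ref{sec:adapted}: construct fibers on the Hilbert space $Y = L^2(\Omega)$, transplant them to $B = C^0(\Omega)$, verify asymptotic decay of the height on each fiber, and identify every critical point as a nondegenerate local maximum; Proposition \ref{prop:folds} then yields the fold. Decompose $F = L - N$ with $\gamma = (a+b)/2$, $L = K - \gamma I$ and $N(u) = f(u) - \gamma u$. On $Y$, $L$ is self-adjoint with $\sigma(L) = \sigma(K) - \gamma$; by Krein-Rutman applied to the positive compact operator $K$, $\lambda_p = \|K\|$ is simple with a strictly positive eigenfunction $\phi_p \in B$. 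The Nemitski map $N$ has Lipschitz constant $n = (b-a)/2$ on $Y$ since $|f'(s) - \gamma| \le (b-a)/2$, and the hypothesis $\lambda_s < a < \lambda_p < b$ gives $[-n, n] \cap \sigma(L) = \{\lambda_p - \gamma\}$, i.e., condition $(H)$ of Section \ref{sec:fibers}. Theorem \ref{theo:Fvv} then stratifies $Y$ into $C^1$ fibers $u(z, t) = w(z, t) + t\phi_p$.

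To move these fibers to $B$, note that $DF(u) = K - f'(u) I$ is compact plus isomorphism on $B$ (the hypotheses force $a > 0$, so multiplication by $f'(u) \ge a > 0$ is invertible), hence $DF(u): B \to B$ is Fredholm of index zero; the proposition on transplanting in Section \ref{sec:fifibers} gives that each fiber either lies entirely in $B$ or avoids $B$, and since $\phi_p \in B$ every fiber through a point of $B$ stays in $B$. Along each fiber, $h(u(z, t)) = \lambda_p t - \int_\Omega f(u(z, t))\, \phi_p\, dx$. Strict convexity of $f$ with $f'(\pm \infty) = b, a$ yields, for every $\delta > 0$, global lower bounds $f(s) \ge (b - \delta) s - M^+_\delta$ and $f(s) \ge (a + \delta) s - M^-_\delta$ on $\RR$ (each function $s \mapsto f(s) - cs$ attains a global minimum). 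Multiplying by $\phi_p \ge 0$ and using $\int u(z, t) \phi_p\, dx = t$ gives
\begin{equation*}
h(u(z, t)) \le (\lambda_p - b + \delta)\, t + M^+_\delta\, \|\phi_p\|_1 \quad \text{and} \quad h(u(z, t)) \le (\lambda_p - a - \delta)\, t + M^-_\delta\, \|\phi_p\|_1.
\end{equation*}
Choosing $\delta$ small, the first forces $h \to -\infty$ as $t \to +\infty$ and the second as $t \to -\infty$, uniformly on bounded sets of $z$; this is $(V\pm)$ of Section \ref{subsec:asy} and, via the properness proposition of Section \ref{subsec:proper}, makes $F$ proper.

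Finally, classify the critical points. At $u_0 = u(z_0, t_0) \in C$, the tangent $\phi := D_t u(z_0, t_0)$ spans $\ker DF(u_0)$, and $K\phi = f'(u_0) \phi$ rewrites as $T(u_0)\phi = \phi$ where $T(u_0) := f'(u_0)^{-1} K$ is compact and positive-cone preserving. Krein-Rutman delivers simplicity of $0$ as an eigenvalue of $DF(u_0)$ and strict positivity of $\phi$. The $C^1$ branch $\lambda_p(u)$ of Proposition \ref{spectralsing} then satisfies, by self-adjoint perturbation theory,
\begin{equation*}
D_t \lambda_p(u_0) = \langle D_t DF(u_0)\, \phi,\, \phi\rangle = -\int_\Omega f''(u_0)\, (D_t u)\, \phi^2\, dx = -c \int_\Omega f''(u_0)\, \phi^3\, dx < 0,
\end{equation*}
for some $c > 0$, since $D_t u = c\phi$ with $\langle D_t u, \phi_p\rangle = 1 > 0$ and $f'', \phi > 0$. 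Combining with $\lambda_p = p \cdot D_t h$, $p > 0$, from Proposition \ref{spectralsing} gives $D_t^2 h(u_0) = D_t \lambda_p(u_0)/p(u_0) < 0$: every critical point is a nondegenerate local maximum of the height on its fiber. Proposition \ref{prop:folds} then concludes that $F$ is a global fold.

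The main obstacle is the Krein-Rutman step producing simplicity and positivity of $\ker DF(u_0)$: in the Ambrosetti-Prodi original, elliptic regularity makes this immediate, but here it must be extracted from the positivity-preservation of $K$ combined with the strict positivity $f' \ge a > 0$. Weakening the implicit $a > 0$ would demand a more delicate treatment both of the Fredholm property of $DF(u)$ on $B$ and of the eigenfunction theory at critical points, and so is left outside the present scope.
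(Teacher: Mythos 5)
The paper itself defers the proof of Theorem~\ref{theo:APC} to forthcoming work, but your architecture is exactly the one it prescribes: build fibers for $F=L-N$ on $Y=L^2(\Omega)$ with $\gamma=(a+b)/2$, $L=K-\gamma$, $N(u)=f(u)-\gamma u$, check $(H)$ from $\lambda_s<a<\lambda_p<b$, transplant to $B=C^0(\Omega)$ via the Fredholm property of $DF(u)=K-f'(u)$ (using $f'\ge a>0$), get $h^a(z,t)\to-\infty$ from the convexity bounds $f(s)\ge(b-\delta)s-M^+_\delta$ and $f(s)\ge(a+\delta)s-M^-_\delta$, and reduce the critical-point analysis to the sign of $D_t\lambda_p(u_0)=-c\int f''(u_0)\phi^3$. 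One small correction: $F$ is Lipschitz but not differentiable on $L^2$ (the paper stresses this), so Theorem~\ref{theo:Fvv} only produces Lipschitz fibers in $Y$; they become $C^1$ only after the transplant to $B$, where the Nemitskii map is $C^1$.

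The one step that does not hold as written is ``Krein--Rutman delivers simplicity of $0$ as an eigenvalue of $DF(u_0)$ and strict positivity of $\phi$.'' The hypothesis is only that $K$ preserves the positive cone; without irreducibility or strong positivity, Krein--Rutman gives merely that the spectral radius of $T(u_0)=f'(u_0)^{-1}K$ is an eigenvalue with a \emph{nonnegative} eigenvector -- neither simplicity nor strict positivity follows. Worse, your argument never shows that the eigenvalue $1$ of $T(u_0)$ (which is all that $\ker DF(u_0)\neq\{0\}$ provides) coincides with that spectral radius, so the Krein--Rutman eigenvector need not a priori span the kernel. Both defects are repairable from hypotheses you already invoked: pass to the symmetric, cone-preserving, compact operator $S=f'(u_0)^{-1/2}Kf'(u_0)^{-1/2}$, whose eigenvalue problem at $1$ is equivalent to $\ker DF(u_0)$, and use min--max together with $\lambda_s<a<f'(u_0)<b$ to bound its second eigenvalue by $\lambda_s/a<1$; hence whenever $DF(u_0)$ is singular, $1$ is the simple largest eigenvalue of $S$ and its eigenvector is nonnegative and nontrivial. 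That weaker conclusion suffices for your final inequality, since $\int f''(u_0)\,(D_tu)\,\phi^2\,dx>0$ already follows from $\phi\ge0$, $\phi\not\equiv0$, $f''>0$ and the positivity of the $\phi_p$-component of the fiber tangent -- strict positivity of $\phi$ is never actually needed. With that substitution the proof goes through.
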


The reader should notice that $F$ is Lipschitz but not  differentiable as a map from $L^2(\Omega)$ to itself.  Still, the direct construction of fibers in $C^0(\Omega)$ is not a simple matter, because properness of $F$ is not immediate. Transplanting fibers in this example is convenient, and was also used in Mandhyan's context.

\subsection{Folds as perturbations of non-self-adjoint operators} \label{subsec:nonself}

McKean and Scovel (\cite{McKS}, \cite{CT1}) studied the Riccati-like map on  functions
\[ u \in L^2([0,1]) \mapsto u + (D_2)^{-1} f(u) \in L^2([0,1]), \quad f(x) = x^2 / 2 , \]
where $(D_2)^{-1}$ is the inverse of the second derivative acting on $W^{1,2}([0,1])$ and showed that the critical set consists of a countable union of (topological) hyperplanes. Church and Timourian (\cite{CT})  showed that the restriction of such map to a neighborhood of one specific critical component is (after global homeomorphic change of variables) a fold. The techniques employed are in the spirit of the original Ambrosetti-Prodi paper.

Fibers were relevant in (\cite{MST2}), where perturbations of first order differential equations (clearly, non-self-adjoint operators) were shown to be global folds. An example is the map on periodic functions with (generic) convex nonlinearities $f$,
\[ F: C^1 (\TT^1) \to C^0(\TT^1), \quad u \mapsto u ' + f(u) \ . \]
McKean and Scovel (\cite{McKS}) and Kappeler and Topalov (\cite{KT}) considered the same map among Sobolev spaces, the celebrated {\it Miura map}, used as a change of variables between the Korteweg-deVries equation and its so called modified version.

More recently, a perturbation of a non-self-adjoint elliptic operator (as in \cite{BNV}, but with Lipschitz boundary) has been shown to yield a global fold (\cite{STZ}).

\subsection{Acknowledgements}

The first author thanks the Departamento de Matemática, PUC-Rio, for its warm hospitality. The second and third authors gratefully acknowledge support from CAPES, CNPq and FAPERJ. We thank José Cal Neto and Otavio Kaminski for the numerical examples.

\bigskip\bigskip\bigbreak

{

\parindent=0pt
\parskip=0pt
\obeylines

\bigskip

Marta Calanchi, Dipartimento di Matematica, Università di Milano,
Via Saldini 50, 20133 Milano, Italia

\smallskip

Carlos Tomei and André Zaccur, Departamento de Matem\'atica, PUC-Rio,
R. Mq. de S. Vicente 225, Rio de Janeiro, RJ 22453-900, Brazil

\bigskip

marta.calanchi@unimi.it
carlos.tomei@gmail.com
zaccur.andre@gmail.com
}

\end{document}